\theoremstyle{plain}
\newtheorem{cor}[subsection]{Corollary}
\newtheorem{lem}[subsection]{Lemma}
\newtheorem{thm}[subsection]{Theorem}
\newtheorem{prop}[subsection]{Proposition}
\newtheorem*{notation*}{Notation}
\theoremstyle{definition}
\theoremstyle{remark}
\newtheorem{rem}[subsection]{Remark}
\newcommand{\Z}{\mathbb{Z}}
\newcommand{\kk}{\mathbf{k}}
\newcommand{\lp}{_{(p)}}
\newcommand{\opn}{\operatorname}
\numberwithin{equation}{section}
\title{Cohomology of $BPU_n$ and rings of invariants of Weyl groups}
\author[F. Fan]{Feifei Fan}
\address{School of Mathematical Sciences, South China Normal University,
Guangzhou, 510631, P. R. China}
\email{fanfeifei@mail.nankai.edu.cn}
\author[J. Zha]{Jiaxi Zha}
\address{Department of Mathematics, Nankai University, No.94 Weijin Road, Tianjin 300071, P. R. China}
\email{1093913699@qq.com}
\author[Z. Zhang]{Zhilei Zhang}
\address{School of Science, Xi’an Shiyou University,
 Xi’an 710065, P. R. China}
\email{15829207515@163.com}
\author[L. Zhong]{Linan Zhong$^*$}
\address{Department of Mathematics, Yanbian University, Yanji 133000, P. R. China}
\email{lnzhong@ybu.edu.cn}
\thanks{$*$ Corresponding author}
\subjclass[2020]{55R35, 55R40, 55T10}
\keywords{classifying spaces, projective unitary groups, Serre spectral sequences.}
\thanks{}
\begin{document}

\begin{abstract}
    Let $PU_n$ denote the projective unitary group of rank $n$ and $BPU_n$ be its classifying space, for $n>1$.
    Using the Serre spectral sequence associated to the fibration $BU_n\to BPU_n\to K(\mathbb{Z},3)$, we compute the integral cohomology group of $BPU_n$ in dimensions $\leq 14$. In addition, we determine the ring
    structure of $H^*(BPU_n;\mathbb{Z})$ up to dimension $13$ by computing the ring of
    invariants $H^*(BT_{PU_n})^W$ of the Weyl group action in dimensions $\leq 12$.
\end{abstract}

\maketitle

\section{Introduction}
Let $G$ be a compact connected Lie group. One of the main invariants associated
with $G$ is the cohomology of the classifying space $BG$. Let $T_G$ be a
maximal torus of $G$. We denote by $W$ the Weyl group $N_G(T_G)/T_G$ of $G$. Since $G$ is connected, the Weyl group action induced by conjugation in $G$ is homotopically trivial and so the action of $W$ on the cohomology of $BG$ is trivial. Therefore, we have the induced homomorphism of cohomology rings with coefficient ring $R$
\[H^*(BG;R)\to H^*(BT_G;R)^W,\]
where $H^*(BT_G;R)^W$ is the ring of invariants under the conjugation $W$-action. 

For rational coefficients, we have $H^*(BG;\mathbb{Q})\cong H^*(BT_G;\mathbb{Q})^{W}$ by the work of Borel \cite{Borel1953,Borel1955}. 
Similar identifications hold if the order of the Weyl group is invertible in the coefficient ring $R$. For general $G$ and $R$, the $R$ cohomology ring of $BG$ can be quite complicated.

In this paper we study the integral cohomology of $BG$ and the ring of invariants $H^*(BT_G;\mathbb{Z})^{W}$ for $G=PU_n$, the \emph{projective unitary group} obtained as the quotient group of the unitary group $U_n$ by its center $S^1$. 
The cohomology of $BPU_n$ plays a significant role in the study of the topological period-index problem (\cite{antieau2014period,antieau2014topological,gu2019topological,
gu2020topological}).
It is also crucial in the study of anomalies in particle 
physics (\cite{cordova2020anomalies,garcia2019dai}). Recently, Chen and Gu \cite{CG24} have found another interesting application of the cohomology of $BPU_n$ to the topological complexity problem in enumerative algebraic geometry.

Although the integral cohomology of $PU_n$ is fully determined in \cite{duan}, the cohomology of $BPU_n$ is not known for general $n$, and its calculation is known as a difficult problem in algebraic topology. For special values of $n$, the cohomology of $BPU_n$ has been studied in various works, such as Kono-Mimura \cite{kono1975cohomology}, Kono-Mimura-Shimada \cite{kono1975cohomology1},  Kono-Yagita \cite{Kono_Yagita}, Kameko-Yagita \cite{Kameko2008brown}, Toda \cite{toda1987cohomology}, Vezzosi \cite{vezzosi2000chow}, Vavpeti{\v{c}}-Viruel \cite{vavpetivc2005mod}, Vistoli \cite{vistoli2007cohomology}, and \cite{fan2024cohomology,Fan25BPUp} by the first named author. Summarizing these results, the ring structure of $H^*(BPU_n;R)$ for an arbitrary coefficient ring $R$ is determined for $n\leq 4$.

For an arbitrary integer $n\geq 5$, the cohomology of $BPU_n$ is only known in a finite range of dimensions. For $k\leq5$, $H^k(BPU_n;\Z)$ was calculated by Antieau and Williams \cite{antieau2014topological}. Recently, Gu made a breakthrough in this direction \cite{gu2019cohomology}, where the ring structure of $H^*(BPU_n;\Z)$ in dimensions $\leq 10$ for an arbitrary value of $n$ is determined. Subsequently, the first named author \cite{fan2024operators} improved Gu's result by determining the ring structure of $H^*(BPU_n;\Z)$ in dimensions $\leq 11$.


\begin{notation*}
To simplify notation we write $H^* (X)$ for the integral cohomology $H^*(X;\Z)$.  Given an abelian group $A$ and a prime number $p$, we let $A\lp$ denote the localization of $A$ at $p$, and let $_pA$ denote the $p$-primary subgroup of $A$.  In other words,
$_pA$ is the subgroup of $A$ consisting of torsion elements whose order is a power of $p$. One useful observation is that there exists a canonical isomorphism $_pH^*(-)\cong {_p[H^*(-)\lp]}$.  
Lastly, we use the notation 
$\gcd(m,n)$ to denote the greatest common divisor of two integers $m,n$.
\end{notation*}

In this paper, we determine the group structure of $H^*(BPU_n)$ in dimensions $\leq 14$ and the ring structure of $H^*(BPU_n)$ in dimensions $\leq 13$. We also compute $H^*(BT_{PU_n})^W$ in dimensions $\leq 12$. These calculations have some potential applications. For example, the determination of $H^*(BPU_n)$ in a larger range of dimensions can be used to extend the period-index results in \cite{antieau2014topological,gu2019topological,
gu2020topological} to higher dimensional CW-complexes. Furthermore, the calculation of $H^*(BT_{PU_n})^W$ turns out to be of some importance in the study of the topological complexity of enumerative problems in algebraic geometry \cite{CG24}.

Now we outline our strategy for studying $H^*(BPU_n)$ for arbitrary $n$. 
For a prime $p$, if $p\nmid n$, then the space $B(\Z/n)$ is 
$p$-locally contractible. Thus the Serre spectral sequence associated to the fiber sequence
\[B(\Z/n)\to BSU_n\to BPU_n \]
yields 
\begin{equation}
    \label{equ:p subgroup p not divide n}
    H^*(BPU_n)_{(p)}\cong H^*(BSU_n)_{(p)}=\Z_{(p)}[c_2, c_3, \dots, c_n ], 
\ p\nmid n.
\end{equation}
In other words, $_pH^*(BPU_n)=0$. Similarly, since $H^*(B(\Z/n);\mathbb{Q})=0$, we also have
\begin{equation}
\label{equ:torsion-free}
    H^*(BPU_n;\mathbb{Q})\cong
    H^*(BSU_n;\mathbb{Q})=
    \mathbb{Q} [c_2, c_3, \dots, c_n ].
\end{equation}
Hence, to determine the group structure of $H^*(BPU_n)$, it suffices to determine the $p$-primary subgroup $_pH^*(BPU_n)$ for $p\mid n$.

For any odd prime $p$, $_p H^* (BPU_n)$ in 
dimensions less than $2p+15$ have been completely determined
in a series of works by some authors of this paper and their collaborators \cite{gu_zzz,zzz,zhang2024,fan2024operators}. So, in order to determine $H^*(BPU_n)$ in
dimensions less than 15, it suffices to compute $_2H^*(BPU_n)$ in these dimensions, which is our first theorem.

\begin{thm}\label{thm: tor subgroup}
     For an integral $n\geq 2$, the torsion subgroup of $H^s(BPU_n;\Z)$ for $11<s<15$ is as follows: 
        \begin{gather*}
                H^{12}(BPU_n)_{tor} \cong \Z/\gcd(2,n)
                \oplus\Z/\gcd(5,n),\\
                H^{13}(BPU_n)_{tor}  \cong H^{14}(BPU_n)_{tor}\cong \Z/\gcd(2,n).
        \end{gather*}
\end{thm}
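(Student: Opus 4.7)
The plan is first to reduce the theorem to a $2$-primary calculation and then to carry that out with the Serre spectral sequence of $BU_n\to BPU_n\to K(\Z,3)$ localized at $2$. Since $H^*(BPU_n;\mathbb{Q})$ is torsion-free by (\ref{equ:torsion-free}), and only primes $p\mid n$ contribute to torsion by (\ref{equ:p subgroup p not divide n}), the group $H^s(BPU_n)_{tor}$ splits canonically as $\bigoplus_{p\mid n}{_pH^s(BPU_n)}$. For every odd prime $p$, the groups $_pH^s(BPU_n)$ in the range $s<2p+15$ are already determined by \cite{gu_zzz,zzz,zhang2024,fan2024operators}; reading those tables for $s=12,13,14$ leaves exactly one nonzero odd-primary summand, namely $_5H^{12}(BPU_n)\cong \Z/\gcd(5,n)$. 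Hence the theorem reduces to the claim $_2H^s(BPU_n)\cong \Z/\gcd(2,n)$ for $s\in\{12,13,14\}$; when $n$ is odd this is immediate from (\ref{equ:p subgroup p not divide n}), so we may assume $n$ is even.

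For the $2$-primary calculation I would run the Serre spectral sequence of the fibration $BU_n\to BPU_n\to K(\Z,3)$ with $\Z_{(2)}$-coefficients. Its $E_2$-page equals
\[
H^*(K(\Z,3);\Z_{(2)})\otimes_{\Z_{(2)}}\Z_{(2)}[c_1,c_2,\dots,c_n],
\]
and the fundamental transgression is $d_3(c_1)=\iota_3$, where $\iota_3\in H^3(K(\Z,3))$ is the fundamental class. Kudo's transgression theorem together with multiplicativity propagates this to the higher differentials acting on the base classes in total degree $\leq 14$, and the low-degree structure of $H^*(K(\Z,3);\Z_{(2)})$ is classical. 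One then reads off the $E_\infty$-page in total degrees $12$, $13$, and $14$.

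The hard part is the extension problem: a priori, two or more $E_\infty^{p,q}$ of order $2$ could assemble into a $\Z/4$ or higher cyclic $2$-group, which would contradict the statement. The plan is to exclude this by naturality, via restriction along $BT_{PU_n}\to BPU_n$ and comparison with the ring of Weyl invariants $H^*(BT_{PU_n})^W$ that this paper separately computes through degree $12$, together with comparison to the known ring structure of $H^*(BPU_n)$ in dimensions $\leq 11$ from \cite{fan2024operators} via Bockstein and $\mathrm{Sq}^1$-relations. I anticipate that the most delicate step will be in degree $14$, where contributions coming from the $\iota_3$-tower in $H^*(K(\Z,3);\Z_{(2)})$ must be checked not to recombine with Chern-class products through a nontrivial extension into a larger cyclic $2$-group.
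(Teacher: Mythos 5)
Your reduction to the prime $2$ and your choice of spectral sequence match the paper, but the core of your outline has two genuine problems. First, the transgression is wrong: in the fibration $BU_n\to BPU_n\to K(\Z,3)$ the fiber inclusion of the center is the diagonal $S^1\to T^n\subset U_n$, so $c_1$ restricts to $nv$ on $BS^1$ and $d_3(c_1)=n\iota_3$, and more generally $d_3(c_k)=(n-k+1)c_{k-1}x_1$ (Corollary \ref{cor:d3}). The factor of $n$ is the entire source of the $n$-dependence of the answer; with $d_3(c_1)=\iota_3$ the class $x_1$ would die at $E_4$ and essentially all the torsion you are trying to compute would disappear. Second, ``Kudo's transgression theorem together with multiplicativity'' does not determine the differentials that actually decide the answer. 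The surviving classes $2c_4c_1x_1\in E_7^{3,10}$ and $c_2^2x_1^2\in E_9^{6,8}$ are not transgressive, and their differentials $d_7$, $d_9$ are exactly where the work is. The paper computes $d_7(2c_4c_1x_1)=\binom{n-1}{3}c_1^2y_{2,1}$ by pushing the calculation into the spectral sequence of $BT^n\to BPT^n\to K(\Z,3)$, where $d_7(2v_n^3x_1)=y_{2,1}$ (Proposition \ref{prop: dT vn x1}); and it evaluates $d_9(c_2^2x_1^2)\in\Z/2\{x_1^5\}\oplus\Z/2\{y_{2,(0,1)}\}$ by comparing with $BPU_2$ (resp.\ $BPU_4$) along diagonal maps and doing a mod-$2$ Steenrod-operation computation in $\Z/2[w_2,w_3]$ showing $x_1^5=y_{2,(0,1)}$ in $H^{15}(BPU_2)$ but that $x_1^5$ and $y_{2,(0,1)}$ are independent in $H^{15}(BPU_4)$. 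This is why $_2H^{14}(BPU_n)$ is generated by $e_4x_1^2$ when $n\equiv0\bmod 4$ but by $e_2y_{2,1}$ when $n\equiv2\bmod 4$; your outline contains no mechanism that could detect this bifurcation. One also needs the external input that $x_1^4$ and $x_1y_{2,1}$ are nonzero in $H^*(BPU_n)$, which the paper imports from the known $BPU_4$ computation rather than deriving from the spectral sequence itself.

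By contrast, the step you single out as the delicate one --- the extension problem --- is trivial here: once the differentials are known, in each of the total degrees $12$, $13$, $14$ the surviving torsion sits in a single filtration level, and the only remaining extension is against the free group $E_\infty^{0,*}$, which splits automatically. The feared hidden $\Z/4$ never has a chance to form, so the restriction to $BT_{PU_n}$ and the Weyl-invariant comparison are not needed for this theorem (they are used elsewhere, e.g.\ to compute $d_7$ and to identify $K_n$).
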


Let $K_n$ be the quotient ring of $H^*(BPU_n)$ by the ideal consisting of torsion elements and $K_n^i$ be the graded piece of $K_n$ in degree $i$. The group structure of $K_n$ is easily obtained from (\ref{equ:torsion-free}), but its ring structure is quite complicated. The representation of $K_n$ by generators and relations is unknown when $n\ge 5$ (see \cite{vezzosi2000chow} for $n=3$, and \cite{fan2024cohomology} for $n=4$). In fact, $K_n$ can be identified with the subring
\[\opn{Im}(Bq)^*\subset H^*(BU_n)\cong \mathbb{Z}[c_1,c_2,\dots,c_n],\]
where $Bq: BU_n\to BPU_n$ is the map induced by the quotient map $q:U_n\to PU_n$, as well as with the ring of invariants $H^*(BT_{PU_n})^W$ (see Theorem \ref{thm:d^0}). Hence, an element of $K_n$ is a polynomial in $\mathbb{Z}[c_1,c_2,\dots,c_n]$. 

By \eqref{equ:torsion-free}, there exist $e_i\in K_n^{2i}$, $2\leq i\leq n$, and a graded ring monomorphism $\Z[e_2,\dots,e_n]\to K_n$ such that the graded quotient group $K_n/\Z[e_2,\dots,e_n]$ is a torsion group (see Section \ref{sec: proof of thm 2} for details of the construction of $e_i$).
It can be shown that this monomorphism is an isomorphism up to dimension $10$. However, in dimension $12$, this map is no longer an isomorphism.

\begin{thm}\label{thm:cyc}
    $K_n^{12}/(e_2^3,e_3^2,e_4e_2,e_6)$ is a cyclic group of order $\lambda_n^3$, where $e_i=0$ if $i>n$, and
        \begin{equation*}
        \begin{split}
            \lambda_n=
            \begin{cases}
            n, & \text{if} \  n\not\equiv 2\mod 4,\\            
            \frac{n}{2}, & \text{if} \  n\equiv 2\mod 4.
            \end{cases}\
        \end{split}
    \end{equation*}
\end{thm}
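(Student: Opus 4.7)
The plan is to realize $K_n$ as the subring of $\Z[c_1, \ldots, c_n] = H^*(BU_n)$ consisting of polynomials invariant under the substitution $c_k \mapsto \sum_{j=0}^{k} \binom{n-j}{k-j} c_j t^{k-j}$, which represents tensoring a rank-$n$ bundle with a line bundle of first Chern class $t$. By Theorem \ref{thm:d^0} this coincides with $H^*(BT_{PU_n})^W$, and the theorem then reduces to an explicit computation of the cokernel of a full-rank sublattice in $K_n^{12}$.

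First, from \eqref{equ:torsion-free} we have $K_n \otimes \mathbb{Q} \cong \mathbb{Q}[e_2, \ldots, e_n]$, so $K_n^{12}$ is free abelian whose rank equals the number of partitions of $6$ into parts from $\{2, 3, \ldots, n\}$. This coincides with the number of nonzero generators of $L_1 := \langle e_2^3, e_3^2, e_4 e_2, e_6\rangle$ under the convention $e_i = 0$ for $i > n$, so $L_1 \subset K_n^{12}$ is a full-rank sublattice and the quotient is a finite abelian group.

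The heart of the proof is to construct a distinguished element $\eta \in K_n^{12}$ whose class generates the quotient cyclically. A natural candidate comes from the mean-centered elementary symmetric polynomial of the Chern roots: setting $\tilde y_i := y_i - c_1/n$ (so that $\sum_i \tilde y_i = 0$), the quantity $e_6(\tilde y_1, \ldots, \tilde y_n)$ is a rational translation-invariant polynomial in $c_1, \ldots, c_n$ whose denominators, when cleared, produce an integral element of $K_n^{12}$ not contained in $L_1$. Equivalently, $\eta$ can be obtained by finding integers $A_1, A_2, A_3, A_4$ such that $F := A_1 e_2^3 + A_2 e_3^2 + A_3 e_4 e_2 + A_4 e_6$ is divisible by $\lambda_n^3$ (as a polynomial in $c_1, \ldots, c_n$) and setting $\eta := F/\lambda_n^3$. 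By construction $\lambda_n^3 \eta = F \in L_1$, so the order of $\eta$ modulo $L_1$ divides $\lambda_n^3$.

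The final and most technical step is to verify that this order is exactly $\lambda_n^3$ and that $\eta$ generates the full quotient. I would argue prime-locally. For primes $p \nmid n$, $K_n^{12}/L_1$ vanishes after localization at $p$ by the discussion around \eqref{equ:p subgroup p not divide n}. For each odd prime $p \mid n$, a direct $p$-adic valuation computation on the $4 \times 4$ transition matrix between $\{e_2^3, e_3^2, e_4 e_2, e_6\}$ and an integral basis of $K_n^{12}\otimes\Zp$ shows that the $p$-part of the quotient is cyclic of order $p^{3v_p(n)}$, using the $p$-local structure of $K_n$ established in \cite{gu_zzz, zzz, zhang2024, fan2024operators}. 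The main obstacle is the $2$-primary analysis for $n \equiv 2 \pmod 4$, where the naively expected $2$-part $2^{3v_2(n)} = 8$ collapses to $1 = 2^{3(v_2(n)-1)}$. I expect this collapse to arise from an extra factor of $2$ entering certain coefficient comparisons among the $e_i$'s precisely when $n \equiv 2 \pmod 4$, stemming from the binomial coefficients $\binom{n-j}{k-j}$ in the substitution defining $K_n$; this extra divisibility then propagates cubically through the Smith normal form of the transition matrix. Formalizing this $2$-local cancellation is the technical heart of the proof.
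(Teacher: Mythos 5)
Your setup is sound---identifying $K_n$ with the translation invariants via Theorem \ref{thm:d^0}, observing that $\Z\{e_2^3,e_3^2,e_4e_2,e_6\}$ is a full-rank sublattice of $K_n^{12}$, and reducing to a prime-by-prime analysis---but every substantive claim after that is deferred to an unspecified ``direct $p$-adic valuation computation'' or ``Smith normal form of the transition matrix,'' and those computations are precisely the content of the theorem. To run them you would first need an explicit integral (or $p$-integral) basis of $K_n^{12}$ for each $p\mid n$, and producing such a basis is the hard part: the paper only exhibits bases of $(K_n^{12}\otimes\kk)\cap\kk[c_1,\dots,c_j]$ for $j=3,4$ over a suitably localized ring $\kk$ (Lemmas \ref{lem: Kn' cap c123} and \ref{lem: Kn'}), and even then cyclicity is not read off from a matrix but proved separately by an abstract argument (Proposition \ref{prop:cyclic}): mod $p$ there is at most one linear relation among the reductions of $e_2^3$, $e_3^2$, $e_4e_2$, and if the $p$-torsion of the quotient had two cyclic summands, the corresponding relations $p^{k_i}b_i=u_ie_2^3+v_ie_3^2+w_ie_4e_2$ could be combined to force a contradiction. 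Nothing in your sketch substitutes for this step. Likewise the $2$-local collapse for $n\equiv 2\bmod 4$, which you explicitly leave open as ``the technical heart,'' is exactly where your expected answer $2^{3v_2(n)}$ must be corrected; in the paper it falls out of the observation that $e_3\equiv c_3+c_1^3\bmod 2$ precisely when $n\equiv 2\bmod 4$, so that $e_2^3,e_3^2,e_4e_2$ are already linearly independent mod $2$ and the $2$-part vanishes. Finally, the order computation itself requires the two explicit relations \eqref{eq:e3 2} and \eqref{eq:e2e4} and a nontrivial verification that the coefficient $b_2$ there is prime to $\lambda_n$ (with a separate adjustment at $p=3$); none of this is visible in your plan.

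Your proposed generator is also misdirected. The two constructions you call ``equivalent'' are not: clearing denominators in $e_6(\tilde y_1,\dots,\tilde y_n)$ produces one specific integral class, whereas the existence of integers $A_1,\dots,A_4$ with $A_1e_2^3+\cdots+A_4e_6$ divisible by $\lambda_n^3$ and with quotient generating the group is an assertion essentially equivalent to the theorem. Moreover the generator actually found in the paper, $\alpha_6$, lies in $\Z[c_1,\dots,c_4]$ and satisfies a relation $\lambda_n^3\alpha_6=be_4e_2+ce_3^2+de_2^3$ in which $e_6$ does not appear at all: the torsion of the quotient is concentrated in the $c_4c_2$-direction, not the $c_6$-direction, so there is no a priori reason for the mean-centered $e_6$ to detect it. Indeed, for $k=2$ the analogous construction gives $2n\,e_2(\tilde y)=2nc_2-(n-1)c_1^2=\gcd(2,n-1)\,e_2$, i.e.\ it just recovers $e_2$ up to content, which suggests that the cleared-denominator $e_6(\tilde y)$ may simply land in the sublattice you are quotienting by and represent the trivial class.
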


Armed with the above theorems, we can further determine the ring structure of
$H^*(BPU_n)$ up to dimensions $13$. Recall that the cohomology rings of $BPU_n$
for $n=2,3,4$ are already known (cf. \cite{vistoli2007cohomology,fan2024cohomology}).
\begin{thm}\label{thm: ring structure}
	For an integer $n\geq5$, $H^*(BPU_n)$ in dimensions $\le 13$ is isomorphic to the following graded ring:
	\[\Z[e_2, \cdots, e_{j_n} , \alpha_6, x_1, y_{3,0}, y_{2,1},y_{5,0}]/I_n.\]
	Here, the degree of $e_i$ is $2i$, $j_n = min\{6, n\}$. 
 The degrees of $\alpha_6$, $y_{5,0}$ are $12$, and
 the degrees of $x_1$, $y_{3,0}$, $y_{2,1}$ are $3$, $8$, $10$ respectively.
 
The ideal $I_n$ is generated by
$$nx_1 ,\  \gcd(2,n)x^2_1 ,\  \gcd(p,n)y_{p,0}, \ \gcd(2,n)y_{2,1},\ e_2y_{3,0},\ e_5x_1,\ \delta(n)e_2x_1,$$
$$(\delta(n)-1)(y_{2,1}-e_2x^2_1),\ e_3x_1,\ \mu(n)e_4x_1,\ \lambda^3_n\alpha_6-be_4e_2-ce_3^2-de_2^3
,$$
where 
    \begin{equation*}
        \begin{split}
            \delta(n)=
            \begin{cases}
            2, & \text{if} \  n\equiv 2\mod 4,\\            
            1, & \text{otherwies,}
            \end{cases}\quad
            \mu(n)=
            \begin{cases}
            4, & \text{if}\  n\equiv 4\mod8,\\
            2, & \text{if}\  n\equiv 0\mod8,\\   
            1, & \text{otherwise,}
            \end{cases}
        \end{split}
    \end{equation*}
and $b,c,d\in\Z$ (see Remark \ref{rem:b,c,d}).
\end{thm}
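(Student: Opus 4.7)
The plan is to extend the known ring presentation of $H^*(BPU_n)$ in dimensions $\le 11$ (established in \cite{fan2024operators}) to dimensions 12 and 13 by identifying the new generators and verifying the new relations. Most generators ($e_2,\ldots,e_5$, $x_1$, $y_{3,0}$, $y_{2,1}$) and most of the relations listed in $I_n$ live in degree $\le 11$ and can be imported from previous work. The genuinely new content is the presence of the torsion-free generator $\alpha_6$ and the $5$-torsion generator $y_{5,0}$ in degree 12, together with $e_6$ when $n\ge 6$, and the additional relations $e_2 y_{3,0}=0$ in degree 12, $e_5 x_1=0$ in degree 13, and $\lambda_n^3\alpha_6=be_4 e_2+ce_3^2+de_2^3$.

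I would first handle the torsion-free part using Theorem \ref{thm:cyc}. The monomorphism $\mathbb{Z}[e_2,\ldots,e_n]\hookrightarrow K_n$ is an isomorphism up to degree 10; in degree 12 it has cokernel cyclic of order $\lambda_n^3$, and lifting a generator of that cokernel produces $\alpha_6$ together with the relation $\lambda_n^3\alpha_6=be_4 e_2+ce_3^2+de_2^3$ for suitable integers $b,c,d$ (whose precise values appear in Remark \ref{rem:b,c,d}). For the torsion part I would separate by prime. By (\ref{equ:p subgroup p not divide n}) only primes $p\mid n$ contribute, and within the range in question Theorem \ref{thm: tor subgroup} together with the odd-prime results from \cite{gu_zzz,zzz,zhang2024,fan2024operators} leaves only $p\in\{2,3,5\}$ to consider. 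A new $5$-torsion class $y_{5,0}$ must be introduced in degree 12 since no lower-degree mod-$5$ class is available, and it satisfies only $\gcd(5,n)y_{5,0}=0$. The vanishing $e_2 y_{3,0}=0$ holds because $H^{12}(BPU_n)$ has no $3$-torsion by Theorem \ref{thm: tor subgroup}, and $e_5 x_1=0$ is forced by the fact that $H^{13}(BPU_n)_{tor}\cong\mathbb{Z}/\gcd(2,n)$ leaves no room for a nontrivial product of a torsion-free class with $x_1$ once the lower-degree relations are imposed.

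Throughout, I would verify each multiplicative relation by locating the product in the Serre spectral sequence for the fibration $BU_n\to BPU_n\to K(\mathbb{Z},3)$, which is the common computational engine of the paper. The main obstacle will be proving that the listed ideal $I_n$ is \emph{complete}: one must show that no further relations arise in degrees 12 or 13. To do this I would compute the graded abelian group underlying the stated quotient ring in each degree, carefully tracking the interaction of $\delta(n)$ and $\mu(n)$ with powers and products of $x_1$, and match it term by term against the group structure of $H^*(BPU_n)$ given by Theorems \ref{thm: tor subgroup} and \ref{thm:cyc} together with (\ref{equ:torsion-free}). A second delicate point is the exact determination of $\lambda_n^3$ and of $b,c,d$ in the $\alpha_6$ relation, which requires an explicit choice of $\alpha_6$ and a direct computation of Weyl-invariant polynomials modulo the subring generated by the classes corresponding to $e_2, e_3, e_4, e_6$.
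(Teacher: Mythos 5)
Your overall strategy coincides with the paper's: import the presentation in degrees $\le 11$ from \cite{fan2024operators}, adjoin $e_6$, $\alpha_6$, $y_{5,0}$ in degree $12$, verify the three new relations, and check completeness against the group structure from Theorems \ref{thm: tor subgroup} and \ref{thm:cyc}. However, two of your verifications have genuine gaps, and both concern the same issue: the relations you want hold a priori only in the associated graded object ${^U\!E}_\infty^{0,*}\cong K_n$ (i.e.\ modulo torsion), and lifting them to actual identities in $H^*(BPU_n)$ is not automatic.

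First, $e_5x_1=0$ is \emph{not} forced by the size of $H^{13}(BPU_n)_{tor}\cong\mathbb{Z}/\gcd(2,n)$: that group is nonzero for $n$ even, generated by $x_1y_{2,1}$, and for an arbitrary lift $\tilde e_5$ of $e_5$ the product $\tilde e_5x_1$ can perfectly well equal $x_1y_{2,1}\neq 0$. The relation only becomes true after renormalizing the lift, replacing $\tilde e_5$ by $\tilde e_5+y_{2,1}$ (both of degree $10$) if necessary; your argument as stated would fail for the wrong choice of lift. Second, for the degree-$12$ relation, Theorem \ref{thm:cyc} (via Corollary \ref{cor:Kn 12}) only tells you that $\alpha:=\lambda_n^3\tilde\alpha_6-b\tilde e_4\tilde e_2-c\tilde e_3^2-d\tilde e_2^3$ is a torsion class, hence of the form $ux_1^4+vy_{5,0}$ with $u\in\mathbb{Z}/\gcd(2,n)$, $v\in\mathbb{Z}/\gcd(5,n)$. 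Showing $u=v=0$ is the substantive step you omit: one multiplies by $x_1$, uses $nx_1=\tilde e_2x_1=\tilde e_3x_1=0$ together with the nonvanishing of $x_1^5$ and of $y_{5,0}x_1$ to kill $u$ and $v$ when $n\not\equiv 2\bmod 4$, and when $n\equiv 2\bmod 4$ one must additionally absorb a possible $x_1^4$ discrepancy into $\tilde\alpha_6$, which works only because $\lambda_n$ is odd in that case and $x_1^4$ is $2$-torsion. These extension arguments are the actual content of the proof; without them the presentation is only established for the associated graded ring, not for $H^*(BPU_n)$ itself.
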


\subsection*{Organization of the paper}
In Section \ref{sec:spectral_sequence}, we introduce the Serre spectral sequence ${^U\!E}$ which is the main tool used to compute the cohomology of $BPU_n$.  In Section \ref{sec: proof of thm 1}, we complete
the proof of Theorem \ref{thm: tor subgroup}
via explicit computations of relevant 
differentials in the spectral sequence. The proof of Theorem \ref{thm:cyc} is purely algebraic and is contained in Section \ref{sec: proof of thm 2}. Theorem \ref{thm: ring structure} is an easy consequence of Theorem \ref{thm: tor subgroup} and \ref{thm:cyc}.


\section{The spectral sequences}\label{sec:spectral_sequence}

In this section, we recall the construction and computational results of the Serre spectral sequence ${^U\!E}$. This spectral sequence played a crucial role in our computation. Additionlally, two auxiliary spectral sequences $^T\!E$ and $^K\!E$ are also introduced, from which we can determine some differentials in $^U\!E$.
\subsection{The Serre spectral sequence $^U\! E$}

Applying the classifying space functor $B$ to the short exact sequence of Lie groups
$$1 \to S^1 \to U_n \xrightarrow{q} PU_n \to 1,$$
we obtain the following fiber sequence:
$$BS^1 \to BU_n \xrightarrow{Bq} BPU_n.$$
Note that $BS^1$ has the homotopy type of the Eilenberg-Mac Lane space $K(\Z, 2)$, so there is an associated fiber sequence
\[
U: ~ BU_n \to BPU_n \xrightarrow{\chi} K(\Z, 3).
\]
We denote the Serre spectral sequence associated to this fibration by $^U\!E$ and we will use it to compute the cohomology of $BPU_n$. The $E_2$ page of
$^U\!E$ has the form
\[
^U\! E^{s, t}_{2} = H^{s}(K(\mathbb{Z},3);H^{t}(BU_{n})) \Longrightarrow H^{s+t}(BPU_{n}).
\] 
We summarize the cohomology of $K(\Z, 3)$ in low dimensions as follows.  The original reference is \cite{cartan19551955}, see also \cite[Proposition 2.14]{gu2019cohomology}.

\begin{prop}\label{prop: p local cohomology of KZ3 below 2p+5}
In degrees up to $15$, $H^*(K(\Z,3))$ is isomorphic to the following graded ring:
$$\Z[x_1,y_{2,1},y_{2,(0,1)}, y_{3,0}, y_{5,0}]/(2x_1^2,2y_{2,1},
2y_{2,(0,1)},3y_{3,0}, 5y_{5,0}),$$
where the degrees of $x_1, y_{3,0},y_{2,1},y_{5,0},y_{2,(0,1)}$ are $3,8,10,12,15$, respectively.
\end{prop}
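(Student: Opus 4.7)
The plan is to compute $H^*(K(\Z,3); \mathbb{F}_p)$ for $p = 2, 3, 5$ through degree $15$ and then assemble the integral cohomology via the Bockstein spectral sequence. These are the only primes that can contribute torsion in this range: for any prime $p \geq 7$, the first nontrivial $p$-torsion class of $K(\Z,3)$ already sits in degree $2p + 2 \geq 16$, so all classes in the stated range are concentrated at the primes $2, 3, 5$ plus the torsion-free fundamental class in degree $3$.

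First, for each relevant prime $p$, I would run the Serre spectral sequence of the path-loop fibration
\[
K(\Z,2) \to PK(\Z,3) \to K(\Z,3)
\]
with $\mathbb{F}_p$ coefficients. The generator $c \in H^2(K(\Z,2); \mathbb{F}_p)$ transgresses to the mod-$p$ fundamental class $\iota_3 \in H^3$, and by the Kudo transgression theorem each admissible Steenrod operation $\mathrm{St}^I c$ transgresses to $\mathrm{St}^I \iota_3$. Enumerating admissible sequences $I$ of appropriate excess (with last term not $\mathrm{Sq}^1$ or $\beta$, because $\iota_3$ lifts to an integral class) and evaluating their degrees gives the classical Cartan description of $H^*(K(\Z,3); \mathbb{F}_p)$ through degree $15$.

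Second, having the mod-$p$ cohomology in hand, I would identify integral generators and their torsion orders via the Bockstein spectral sequence. The class $x_1 \in H^3$ is the (torsion-free) integral fundamental class, and $2x_1^2 = 0$ is immediate from graded commutativity. For the odd primes $p = 3, 5$, the class $P^1\iota_3 \in H^{2p+1}$ has nonzero first Bockstein, and this produces integral torsion classes $y_{3,0} \in H^8$ and $y_{5,0} \in H^{12}$ of orders $3$ and $5$ respectively. At $p = 2$, parallel Bockstein arguments applied to suitable admissible monomials in $\mathrm{Sq}^I \iota_3$ produce the order-$2$ classes $y_{2,1} \in H^{10}$ and $y_{2,(0,1)} \in H^{15}$.

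Finally, to verify that these generators and the listed relations provide the complete presentation through degree $15$, I would match Poincaré series of the proposed ring against the mod-$p$ cohomologies computed in step one, degree by degree. The main obstacle is the bookkeeping at $p = 2$: the richer Steenrod-algebra action and potential higher Bocksteins make it easy to miscount classes or identify the wrong lifts near degrees $10$ and $15$. Fortunately the statement is classical, so the computation can be cross-checked against \cite{cartan19551955} and \cite[Proposition 2.14]{gu2019cohomology}.
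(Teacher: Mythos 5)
The paper does not actually prove this proposition; it is quoted as a known result with a pointer to Cartan's seminar \cite{cartan19551955} and to \cite[Proposition 2.14]{gu2019cohomology}, so there is no in-paper argument to compare against. Your outline is the standard Cartan--Serre computation that underlies those references, and it is correct as a plan: only $p=2,3,5$ can contribute in degrees $\le 15$ (the first $p$-torsion of $K(\Z,3)$ lives in degree $2p+2$), Borel's theorem plus Kudo transgression in the path-loop fibration gives $H^*(K(\Z,3);\mathbb{F}_p)$ (polynomial on $\iota_3, \mathrm{Sq}^2\iota_3, \mathrm{Sq}^4\mathrm{Sq}^2\iota_3$ at $p=2$; generated by $\iota_3, P^1\iota_3, \beta P^1\iota_3$ at $p=3,5$ in this range), and the Bockstein spectral sequence assembles the integral answer. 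Two details in your sketch deserve to be made explicit. First, graded commutativity gives $2x_1^2=0$ but not $x_1^2\neq 0$; that, and the fact that every torsion class here has order exactly $p$ rather than a higher power, comes from computing the first Bockstein differential explicitly, e.g.\ $\mathrm{Sq}^1(\mathrm{Sq}^2\iota_3)=\iota_3^2$, $\mathrm{Sq}^1(\mathrm{Sq}^4\mathrm{Sq}^2\iota_3)=(\mathrm{Sq}^2\iota_3)^2$ (whence $y_{2,1}$ has order $2$ and reduces to $(\mathrm{Sq}^2\iota_3)^2$), and $\mathrm{Sq}^1(\mathrm{Sq}^2\iota_3\cdot\mathrm{Sq}^4\mathrm{Sq}^2\iota_3)=\iota_3^2\,\mathrm{Sq}^4\mathrm{Sq}^2\iota_3+(\mathrm{Sq}^2\iota_3)^3$ (whence $y_{2,(0,1)}$ in degree $15$), after which one checks the first page of the Bockstein spectral sequence is already exact in these degrees so no higher Bocksteins occur. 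Second, the dimension count you propose at the end is genuinely needed to see, for instance, that $H^{14}$ has no torsion (the relation $2x_1^2=3y_{3,0}=0$ forces $x_1^2y_{3,0}=0$, matching the fact that both mod-$2$ classes in degree $14$ support nonzero $\mathrm{Sq}^1$). With those computations filled in, your argument reproduces the cited result.
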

\begin{rem}
    Here we use the same notations for the generators as in \cite{gu2019cohomology}. Sometimes we abuse notations and let these generators denote their images in $H^*(BPU_n)$ under the homomorphism induced by the map $\chi:BPU_n\to K(\Z,3)$ in the fiber sequence $U$.
\end{rem}

Also recall that
\[
    H^{*}(BU_{n}) = \mathbb{Z}[c_1,c_2,\dots,c_n],\ \mid c_i\mid =2i.
\]
Since $H^{*}(BU_{n})$ is torsion-free and is concentrated in even dimensions, we have
\[
    ^U\!E^{s, t}_{3}={^U\!E}^{s, t}_{2} \cong H^{s}(K(\mathbb{Z},3)) \otimes H^{t}(BU_{n}).
\]
The higher differentials of $^U\!E$ have the form
\begin{equation}\label{eq:degree change}
	d_r:{^U\!E}^{s,t}_r\to{^U\!E}^{s+r,t-r+1}_r.
\end{equation}

\subsection{The auxiliary spectral sequences $^T\! E$ and $^K\! E$}

Since the differentials in $^U\!E$ is difficult to compute directly, we use Gu's strategy that compares $^U\!E$ with two auxiliary spectral sequences, which have simpler differential behaviors and  
are introduced as follows.

Let $T^n$ be the maximal torus of $U_n$ with the inclusion denoted by
\[\psi: T^n\to U_n.\]
The normal subgroup of scalar matrices $S^1$ can also be considered as a subgroup of $T^n$. Passing to quotients over $S^1$, we have another inclusion of maximal torus
\[\psi': PT^n\to PU_n,\]
and an exact sequence of Lie groups
\[1\to S^1\xrightarrow{\varphi} T^n\to PT^n\to 1,\]
Here the inclusion map $S^1\to T^n$ can be identified as the diagonal map which we denote by $\varphi$.

Applying the classifying space functor and we obtain the fiber sequence 
\[
T: ~ BT^n \to BPT^n \to K(\mathbb{Z}, 3).
\]
$T$ is our first auxiliary fiber sequence.

Our second auxiliary fiber sequence $K$ is just the following path fibration:
\[
K: ~ K(\mathbb{Z}, 2) \simeq BS^1 \to * \to K(\mathbb{Z}, 3),
\]
where $*$ denotes a contractible space.

These fiber sequences fit into the following homotopy commutative diagram:
\[
    \begin{tikzcd}
        K\arrow[d,"\Phi"]:& BS^1\arrow[r]\arrow[d,"B\varphi"]& *\arrow[r]\arrow[d]&
        K(\Z,3)\arrow[d,"="]\\
        T:\arrow[d,"\Psi"]& BT^n\arrow[r]\arrow[d,"B\psi"]& BPT^n\arrow[r]\arrow[d,"B\psi'"]& K(\Z,3)\arrow[d,"="]\\
        U:& BU_n\arrow[r]& BPU_n\arrow[r]& K(\Z,3)
    \end{tikzcd}
\]

The Serre spectral sequences associated to $U$, $T$, and $K$ will be donoted by $^U\! E$, $^T\! E$ and $^K\! E$, respectively.  We also denote their corresponding differentials by ${^U\!d}_*^{*,*}$, ${^T\!d}_*^{*,*}$ and ${^K\!d}_*^{*,*}$, respectively.

We first describe the comparison maps between $^U\!E$, $^T\!E$ and $^K\!E$.

The induced homomorphisms between cohomology rings are as follows:
\[B\varphi^*:H^*(BT^n) = \Z[v_1,\cdots,v_n] \to H^*(BS^1) = \Z[v],\ v_i\mapsto v.\]
\[
    \begin{split}
        B\psi^*: H^*(BU_n) = \Z[c_1,\cdots,c_n] &\to H^*(BT^n) = \Z[v_1,\cdots,v_n],\\
        c_i &\mapsto \sigma_i(v_1,\cdots,v_n),
    \end{split}
\]
where $\sigma_i(x_1,\cdots,x_n)$ is the $i$th elementary symmetric polynomial in $n$ variables.

We also recall some important propositions regarding the higher differentials in $^K\!E$ and $^T\!E$.

\begin{prop}[\cite{gu2019cohomology}, Corollary 2.16]\label{prop:diff0}
 The higher differentials of ${^K\!E}_{*}^{*,*}$ satisfy
 \begin{equation*}
 \begin{split}
   &d_{3}(v)=x_1,\\
   &d_{2p^{k+1}-1}(p^k x_{1}v^{lp^{e}-1})=v^{lp^{e}-1-
   (p^{k+1}-1)}y_{p,k},\quad e > k\geq 0,\ \gcd(l,p)=1,\\
   &d_{r}(x_1)=d_{r}(y_{p,k})=0,\quad \textrm{for all }r,k>0
 \end{split}
\end{equation*}
and the Leibniz rule. 
\end{prop}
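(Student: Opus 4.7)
The plan is to exploit convergence of the Serre spectral sequence for the path fibration $BS^1\to\ast\to K(\mathbb{Z},3)$ to $H^*(\ast;\mathbb{Z})=\mathbb{Z}$, combined with multiplicativity (the Leibniz rule) and Kudo's transgression theorem. The vanishing statements $d_r(x_1)=d_r(y_{p,k})=0$ for all $r>0$ are the easy part: both $x_1$ and $y_{p,k}$ sit in the bottom row $t=0$, so any nontrivial image under $d_r$ would land in bidegree $(s+r,1-r)$ with $t$-coordinate at most $-1$, which lies outside the support of the spectral sequence.

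For the initial transgression $d_3(v)=x_1$, I would invoke the universal property of the path-loop fibration $K(\mathbb{Z},2)\simeq \Omega K(\mathbb{Z},3)\to PK(\mathbb{Z},3)\to K(\mathbb{Z},3)$: the degree-$2$ fundamental class of the fiber transgresses to the degree-$3$ fundamental class of the base, and a sign convention identifies this image with $x_1$. Applying the Leibniz rule inductively yields $d_3(v^k)=kx_1 v^{k-1}$. A bookkeeping of $E_4$ then produces $E_4^{0,2k}=0$ for $k\geq 1$ and $E_4^{3,2m}\cong\mathbb{Z}/(m+1)$ generated by $x_1 v^m$.

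The heart of the argument is the higher differentials, where the main tool is Kudo's transgression theorem: since $v$ transgresses to $x_1$, the mod-$p$ Steenrod powers and their Bockstein lifts applied to $x_1$ are themselves transgressions of certain $p$-power expressions in $v$. Integrally, this identifies the classes $y_{p,k}\in H^{2p^{k+1}+2}(K(\mathbb{Z},3))$ from Proposition~\ref{prop: p local cohomology of KZ3 below 2p+5} as the targets of a cascade of higher transgressions. Bidegree matching confirms the differential length: the source $p^k x_1 v^{lp^e-1}$ sits at $(3,2lp^e-2)$ and the target $v^{lp^e-p^{k+1}}y_{p,k}$ at $(2p^{k+1}+2,2lp^e-2p^{k+1})$, a shift of $(2p^{k+1}-1,-2p^{k+1}+2)$ matching $d_{2p^{k+1}-1}$. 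The coefficient $p^k$ on the source arises from the $p$-adic filtration of the cyclic group $E_4^{3,2(lp^e-1)}\cong\mathbb{Z}/(lp^e)$: as the successive differentials $d_3, d_{2p^2-1}, \ldots$ operate, each peels off a $\mathbb{Z}/p$ subquotient corresponding to the relevant $y_{p,k}$ for $k=0,1,\dots,e-1$, and the formula follows by extending from the minimal case $(l,e)=(1,k+1)$ via the Leibniz rule.

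The main obstacle is the rigorous Kudo-type identification of these higher transgressions and tracking of the $p$-adic filtration across many pages simultaneously, uniformly in $p$, $e$, $k$, and $l$. A cleaner alternative is to invoke Cartan's explicit computation of $H^*(K(\mathbb{Z},n))$ from \cite{cartan19551955}, which provides explicit cocycle representatives for the $y_{p,k}$ as iterated transgressions of power classes of $v$; under this identification, the differentials in the proposition are read off directly from Cartan's construction, and the total collapse of the spectral sequence to the point $H^*(\ast)$ serves as an independent consistency check.
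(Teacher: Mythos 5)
First, a point of order: the paper does not prove this proposition --- it is quoted verbatim from Gu \cite{gu2019cohomology}, Corollary 2.16 --- so there is no in-paper argument to compare against. Your outline nonetheless follows the standard route, which is essentially that of the cited source: vanishing of $d_r$ on the bottom row for degree reasons, the transgression $d_3(v)=x_1$ in the path--loop fibration, the Leibniz rule, and then Kudo's transgression theorem together with Cartan's description of $H^*(K(\Z,3))$ and the collapse of ${^K\!E}$ to $H^*(*)=\Z$ to force the higher differentials. The two genuinely hard points are exactly the ones you defer: identifying $y_{p,k}$ with an iterated (Bockstein of a) Steenrod power of $x_1$ so that Kudo's theorem applies, and tracking the $p$-primary filtration of ${^K\!E}_r^{3,2(lp^e-1)}$ uniformly across pages. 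Leaving these to Cartan is defensible here, since the proposition itself is imported rather than proved.

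There is, however, one concrete error in your bookkeeping. The claim that ${^K\!E}_4^{3,2m}\cong\Z/(m+1)$ generated by $x_1v^m$ fails when $m$ is odd: by the Leibniz rule $d_3(x_1v^m)=\pm m\,x_1^2v^{m-1}$, and since $x_1^2$ (which is $y_{2,0}$ in Gu's notation) generates $H^6(K(\Z,3))\cong\Z/2$, this is nonzero for odd $m$. Hence $\opn{Ker}d_3^{3,2m}=\Z\{2x_1v^m\}$ and ${^K\!E}_4^{3,2m}\cong\Z/\tfrac{m+1}{2}$, generated by the class of $2x_1v^m$. Note that this nonvanishing $d_3$ is precisely the case $p=2$, $k=0$ of the formula you are proving (there $m=l2^e-1$ is odd), so your stated $E_4$ computation actually contradicts the proposition itself. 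The fix is harmless --- the $2$-primary cascade must start from $2x_1v^m$ rather than $x_1v^m$, which is exactly what the hypothesis $k\ge 1$ and the coefficient $p^k$ encode --- but it must be made, since the $p=2$ instances (fed into Proposition \ref{prop: dT vn x1}) are the ones this paper actually uses.
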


The differentials of ${^K\!E}_{*}^{*,*}$ and 
of ${^K\!T}_{*}^{*,*}$ are related by the following proposition.

\begin{prop}[\cite{gu2019cohomology}, Proposition 3.2]\label{pro:diff1}
    The differential  $^{T}\!d_{r}^{*,*}$, is partially determined as follows:
\[
 ^{T}\!d_{r}^{*,2t}(v_{i}^{t}\xi)={(B\pi_i)^{*}}({^K\!d}_{r}^{*,2t}(v^{t}\xi)),
\]
where $\xi\in {^{T}\!E}_{r}^{*,0}$, a quotient group of $H^*(K(\mathbb{Z}, 3))$, and $\pi_i: T^{n}\rightarrow S^1$ is the projection of the $i$th diagonal entry. In plain words, $^{T}\!d_{r}^{*,2t}(v_{i}^{t}\xi)$ is simply $^{K}\!d_{r}^{*,2t}(v^{t}\xi)$ with $v$ replaced by $v_i$.
\end{prop}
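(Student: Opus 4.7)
The plan is to derive the claimed identity from the functoriality of the Serre spectral sequence, applied to a suitable map of fiber sequences from $T$ to $K$ whose fiber component is $B\pi_i$.

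For each $i\in\{1,\dots,n\}$, I would construct a homotopy-commutative diagram
\[
\begin{tikzcd}
T: & BT^n \arrow[r] \arrow[d,"B\pi_i"] & BPT^n \arrow[r] \arrow[d] & K(\Z,3) \arrow[d,"="] \\
K: & BS^1 \arrow[r] & * \arrow[r] & K(\Z,3)
\end{tikzcd}
\]
in which the middle vertical arrow is the canonical constant map. The left square commutes trivially (both composites land in $*$), so the only substantive condition is that the connecting map $BPT^n\to K(\Z,3)$ of the top row be null-homotopic. This is immediate because $BPT^n\simeq (BS^1)^{n-1}$ has $H^3(BPT^n;\Z)=0$, so every map $BPT^n\to K(\Z,3)$ is null. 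Equivalently, the central extension $1\to S^1\xrightarrow{\varphi} T^n\to PT^n\to 1$ admits a continuous homomorphism section (for instance, the section fixing the $i$-th coordinate to $1$), so the associated principal $S^1$-bundle $T^n\to PT^n$ is trivial and its classifying map is null-homotopic.

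Granted the above map of fiber sequences, functoriality of the Serre spectral sequence produces a morphism ${^K\!E}\to {^T\!E}$ commuting with all differentials. On $E_2$, this morphism acts as $(B\pi_i)^*$ on the fiber cohomology, sending $v\mapsto v_i$, and as the identity on the base cohomology $H^*(K(\Z,3))$; in particular, for $\xi\in{^T\!E}^{*,0}_r$ (pulled back from the base), it sends $v^t\xi\mapsto v_i^t\xi$. Applying naturality of the differential to $v^t\xi$ then yields
\[
{^T\!d}_r^{*,2t}(v_i^t\xi)=(B\pi_i)^*\bigl({^K\!d}_r^{*,2t}(v^t\xi)\bigr),
\]
where $(B\pi_i)^*$ is interpreted, as in the proposition statement, as the substitution $v\mapsto v_i$ applied to the fiber variable, leaving the $H^*(K(\Z,3))$-factor unchanged.

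The only real obstacle is constructing the map of fiber sequences, and this reduces entirely to verifying that the connecting map $BPT^n\to K(\Z,3)$ is null-homotopic; once that is in hand (either via the cohomological shortcut, or via the explicit splitting of the extension), the desired identity follows by a direct application of naturality.
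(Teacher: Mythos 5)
A preliminary remark: the paper offers no proof of this proposition --- it is quoted from Gu's paper --- so I am measuring your argument against the standard one, which, like yours, proceeds by naturality of the Serre spectral sequence along a comparison map from the fiber sequence $T$ to the fiber sequence $K$. Your overall strategy is the right one, but there is a genuine gap in how you produce the comparison map. You claim that the only substantive condition for your diagram to be a map of fiber sequences with left vertical arrow $B\pi_i$ is that $\chi_T\colon BPT^n\to K(\Z,3)$ be null-homotopic. This is not sufficient. A homotopy-commuting square over $\mathrm{id}_{K(\Z,3)}$ induces a map on homotopy fibers only after a null-homotopy $H$ of $\chi_T$ has been chosen, and the resulting fiber map $u_H\colon BT^n\to BS^1=\Omega K(\Z,3)$ depends on $H$: two choices differ by (the restriction to the fiber of) a map $BPT^n\to K(\Z,2)$, so $u_H^*(v)$ can be any class $\sum_j a_jv_j$ with $\sum_j a_j=1$. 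The fact that your left square ``commutes trivially'' is exactly the problem: a square with one corner equal to $*$ carries no information and cannot force $u_H\simeq B\pi_i$. The ambiguity is not harmless --- for a generic $H$ one gets $u_H^*(v)=v_i+w$ with $w=\sum_j a_jv_j$, $\sum_j a_j=0$, and then naturality yields, e.g., $^{T}\!d_3\bigl((v_i+w)^2\bigr)=2(v_i+w)x_1$ rather than $^{T}\!d_3(v_i^2)=2v_ix_1$, so the formula as stated would fail.

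The repair uses precisely the ingredient you mention only as an aside. The coordinate projection $\pi_i\colon T^n\to S^1$ is a retraction of the central extension, so $(\mathrm{id}_{S^1},\pi_i,0)$ is a map of short exact sequences of abelian topological groups from $1\to S^1\to T^n\to PT^n\to 1$ to $1\to S^1\to S^1\to 1\to 1$. Applying the strictly functorial classifying-space/delooping construction for such sequences produces an honest, strictly commuting map of fibration sequences from $T$ to $K$ whose fiber component is $B\pi_i$ and whose base component is $B^2(\mathrm{id}_{S^1})=\mathrm{id}_{K(\Z,3)}$; in particular the induced map on fibers is pinned down, not merely some map compatible with an unspecified null-homotopy. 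With that map in hand, your naturality argument goes through verbatim and gives the proposition (noting, as you implicitly do, that it only determines $^{T}\!d_r$ on classes lying in the image of $(B\pi_i)^*$, i.e., on the classes $v_i^t\xi$).
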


By comparing with the differentials in $^K\!E$, one could obtain the following results on differentials in $^T\!E$.

\begin{prop}
\label{prop: dT vn x1}
In the $2$-localized spectral sequence $^T\!E$, we have
\begin{enumerate}
    \item 
    $2v_n x_1, 4v_n ^3 x_1 ,2v_n^5 x_1 \in \opn{Im}
{^{T}\!d}_{3}$.
    \item 
    $^{T}\!d^{3,*}_{7}(2v_{n}^3 x_1) = y_{2,1}$.
\end{enumerate} 
\end{prop}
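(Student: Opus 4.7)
For part (1), Proposition \ref{pro:diff1} applied with $t=1$ together with $^K\!d_3(v)=x_1$ from Proposition \ref{prop:diff0} yields $^T\!d_3(v_n)=x_1$, and the Leibniz rule then gives $^T\!d_3(v_n^m)=mv_n^{m-1}x_1$ for every $m\geq 1$. Taking $m=2$ and $m=4$ directly produces $2v_nx_1$ and $4v_n^3x_1$ in $\opn{Im}({^T\!d}_3)$. For $2v_n^5x_1$, setting $m=6$ gives $^T\!d_3(v_n^6)=6v_n^5x_1$; since $3$ is a unit in $\Z_{(2)}$, the element $\tfrac{1}{3}v_n^6\in{^T\!E}_3^{0,12}$ is a legitimate preimage of $2v_n^5x_1$ in the 2-localized spectral sequence.

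For part (2), the key input on the $^K\!E$ side is the special case $p=2$, $k=1$, $l=1$, $e=2$ of Proposition \ref{prop:diff0}, namely $^K\!d_7(2v^3x_1)=y_{2,1}$. Naturality applied to the fibration map $\Phi:K\to T$ from the diagram in this section provides a morphism of spectral sequences $\Phi^*:{^T\!E}\to{^K\!E}$ which on the $E_2$-page sends $v_i\mapsto v$ on fibres and is the identity on base classes. Consequently
\[
\Phi^*\bigl({^T\!d}_7(2v_n^3x_1)\bigr)={^K\!d}_7(2v^3x_1)=y_{2,1},
\]
and since $\Phi^*$ acts as the identity on $H^{10}(K(\Z,3))=\Z/2\langle y_{2,1}\rangle$, this forces ${^T\!d}_7(2v_n^3x_1)=y_{2,1}$, provided $2v_n^3x_1$ represents a nonzero class on the $E_7$-page of $^T\!E$.

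Thus the main bookkeeping step is to certify that $2v_n^3x_1$ survives to, and is nonzero on, $E_7^{3,6}$ of the 2-localized $^T\!E$. The Leibniz rule gives ${^T\!d}_3(2v_n^3x_1)=6v_n^2x_1^2=0$ using the relation $2x_1^2=0$ from Proposition \ref{prop: p local cohomology of KZ3 below 2p+5}. The $d_4$- and $d_6$-targets $E_r^{7,3}$ and $E_r^{9,1}$ vanish because $H^*(BT^n)$ is concentrated in even degrees, while the $d_5$-target is a subquotient of $H^8(K(\Z,3))\otimes H^2(BT^n)$, which vanishes 2-locally since $H^8(K(\Z,3))=\Z/3\langle y_{3,0}\rangle$ is killed. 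Nonvanishing on $E_7$ follows from the same naturality argument: $\Phi^*(2v_n^3x_1)=2v^3x_1$ is nonzero on $E_7^{3,6}$ of $^K\!E$ because its ${^K\!d}_7$-image equals $y_{2,1}\neq 0$. The principal obstacle is precisely this survival-and-nonvanishing bookkeeping; once it is settled, the computation of the differential is a direct transport from $^K\!E$.
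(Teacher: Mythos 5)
Your proof is correct. Part (1) is essentially the paper's argument: transport $^K\!d_3$ to $^T\!E$ via Proposition \ref{pro:diff1} and use the Leibniz rule, with $\frac13 v_n^6$ handled identically (inverting $3$ in $\Z_{(2)}$). For part (2) you take a slightly different route: the paper simply invokes Proposition \ref{pro:diff1} with the second formula of Proposition \ref{prop:diff0} (for $p=2$, $k=l=1$, $e=2$), i.e.\ it uses the comparison map $^K\!E\to{^T\!E}$ induced by the projection $\pi_n\colon T^n\to S^1$, whose whole point is that it hands you ${^T\!d}_7(2v_n^3x_1)=(B\pi_n)^*(y_{2,1})=y_{2,1}$ with no survival bookkeeping. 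You instead use the map $\Phi^*\colon{^T\!E}\to{^K\!E}$ induced by the diagonal $\varphi\colon S^1\to T^n$, which only bounds the answer from one side and therefore forces you to certify separately that $2v_n^3x_1$ survives nonzero to ${^T\!E}_7^{3,6}$ and that ${^T\!E}_7^{10,0}$ still contains $y_{2,1}$; your checks of these points (odd-row vanishing for $d_4,d_6$, the $3$-torsion of $H^8(K(\Z,3))$ for $d_5$, the relation $2x_1^2=0$ for $d_3$, and detection of nonvanishing through $\Phi^*$ itself) are all valid. The trade-off is that your argument is self-contained modulo Proposition \ref{prop:diff0} and the existence of $\Phi$, at the cost of several pages' worth of bookkeeping that Proposition \ref{pro:diff1} was designed to eliminate.
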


\begin{proof}
(1) From the first formula in Proposition \ref{prop:diff0} together with  Proposition \ref{pro:diff1}, we have $^{T}\!d_{3}
(v_n ^2)=2v_n x_1,\ ^{T}\!d_{3} (v_n ^4)=4v_n ^3 x_1 ,\ 
^{T}\!d_{3}(\frac{1}{3} v_n ^6)=2v_n^5 x_1$.

(2) It is proved by applying Proposition \ref{pro:diff1} and the second formula in Proposition \ref{prop:diff0}, taking 
$k=l=1, e=2$.
\end{proof}

The following proposition and its corollary are useful for our computations.
\begin{prop}[\cite{gu2019cohomology}, Proposition 3.3]
\label{prop: d3 of T}
For ${^U\!E}_*^{*,*}$, we have
    \begin{enumerate}
        \item The differential $^{T}\!d_{3}^{0,t}$ is given by the ``formal divergence''
        \[\nabla=\sum_{i=1}^{n}(\partial/\partial v_i): H^{t}(BT^{n})\rightarrow H^{t-2}(BT^{n}),\]
        in such a way that $^{T}\!d_{3}^{0,*}=\nabla(-)\cdot x_{1}.$ 
        \item The spectral sequence degenerates at ${{^T}\!E}^{0,*}_{4}$. Indeed, we have $^{T}\!E_{\infty}^{0,*}=$ $^{T}\!E_{4}^{0,*}=\operatorname{Ker}{^T\!d}_{3}^{0,*}=\mathbb{Z}[v_{1}-v_{n},\cdots, v_{n-1}-v_{n}]$.
    \end{enumerate}
\end{prop}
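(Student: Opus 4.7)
My plan is to prove (1) by combining naturality with the spectral sequence $^K\!E$ and the derivation property of $^T\!d_3$, and to prove (2) by sandwiching $^T\!E_\infty^{0,*}$ between an explicit edge-homomorphism image and the kernel of $^T\!d_3^{0,*}$.

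For part (1), I would specialize Proposition~\ref{pro:diff1} to $t=1$, $\xi=1$: combined with $^K\!d_3(v)=x_1$ from Proposition~\ref{prop:diff0}, this yields $^T\!d_3(v_i) = (B\pi_i)^*(x_1)=x_1$ for every $i$, since naturality of the Serre spectral sequence under the fibration map $T\to K$ induced by $\pi_i$ fixes the base class $x_1$. Because $^T\!d_3$ is a derivation on the multiplicative spectral sequence $^T\!E$ and $x_1\in{^T\!E_3^{3,0}}$, applying the Leibniz rule to an arbitrary monomial gives
\[
^T\!d_3(v_1^{a_1}\cdots v_n^{a_n}) = \sum_{i=1}^n a_i\, v_1^{a_1}\cdots v_i^{a_i-1}\cdots v_n^{a_n}\cdot x_1 = \nabla(v_1^{a_1}\cdots v_n^{a_n})\cdot x_1,
\]
and $\Z$-linear extension produces the stated formula on all of $H^*(BT^n)$.

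For part (2), the key observation is $PT^n = T^n/S^1 \cong T^{n-1}$, so $BPT^n \simeq (BS^1)^{n-1}$ and $H^*(BPT^n) \cong \Z[u_1,\ldots,u_{n-1}]$. Parametrizing $PT^n$ via $[z_1,\ldots,z_n]\mapsto(z_1 z_n^{-1},\ldots,z_{n-1} z_n^{-1})$, the edge homomorphism $H^*(BPT^n)\to H^*(BT^n)$ sends $u_i\mapsto v_i-v_n$, so its image is the polynomial subring $\Z[v_1-v_n,\ldots,v_{n-1}-v_n]$. By the general description of edge maps this image coincides with $^T\!E_\infty^{0,*}$, which in turn is contained in $^T\!E_4^{0,*}=\operatorname{Ker}{^T\!d}_3^{0,*}$. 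A direct change of variables $w_i=v_i-v_n$ for $i<n$ and $w_n=v_n$ converts $\nabla$ into $\partial/\partial w_n$, showing $\operatorname{Ker}\nabla=\Z[v_1-v_n,\ldots,v_{n-1}-v_n]$. Hence the chain
\[
\Z[v_1-v_n,\ldots,v_{n-1}-v_n] = {^T\!E_\infty^{0,*}} \subseteq {^T\!E_4^{0,*}} = \Z[v_1-v_n,\ldots,v_{n-1}-v_n]
\]
is an equality throughout, simultaneously identifying $^T\!E_\infty^{0,*}$ and establishing degeneration at the $4$th page on column $0$.

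The main obstacle is bookkeeping: one must carefully exhibit the fibration map $T\to K$ induced by $\pi_i$ (using $\pi_i\circ\varphi=\operatorname{id}_{S^1}$) and trace the induced morphism of Serre spectral sequences to confirm $(B\pi_i)^*(x_1)=x_1$. Once this naturality step is in hand, the remainder reduces to the elementary facts about $\nabla$ and polynomial subrings used above.
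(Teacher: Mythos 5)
This proposition is quoted verbatim from Gu \cite{gu2019cohomology} and the paper offers no proof of its own, so there is nothing in the text to compare against. Your argument is correct and is essentially the standard one (and essentially the cited source's): part (1) by specializing Proposition~\ref{pro:diff1} to get ${^T\!d}_3(v_i)=x_1$ and extending by the Leibniz rule, and part (2) by identifying $PT^n\cong T^{n-1}$, noting that the image of the edge homomorphism $H^*(BPT^n)\to H^*(BT^n)$ is both equal to ${^T\!E}_\infty^{0,*}$ and to $\mathbb{Z}[v_1-v_n,\dots,v_{n-1}-v_n]$, and checking via the change of variables $w_i=v_i-v_n$, $w_n=v_n$ that this subring is exactly $\operatorname{Ker}\nabla$.
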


\begin{cor}[\cite{gu2019cohomology}, Corollary 3.4]\label{cor:d3}
Let $c_0 =1$. For $k\ge 1$, we have
    \[^{U}\! d_{3}^{0,*}(c_{k})=\nabla(c_{k})x_1=(n-k+1)c_{k-1}x_1.\]
\end{cor}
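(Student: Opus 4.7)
The plan is to deduce the formula by naturality from its analogue in the auxiliary spectral sequence $^T\!E$, which is already available in Proposition \ref{prop: d3 of T}. The commutative diagram of fiber sequences in Section \ref{sec:spectral_sequence} induces a morphism of Serre spectral sequences $^U\!E \to {^T\!E}$ whose map on the fiber cohomology is the standard restriction
\[
B\psi^*\colon H^*(BU_n) \to H^*(BT^n),\quad c_k\mapsto \sigma_k(v_1,\ldots,v_n).
\]
By naturality of $d_3$,
\[
B\psi^*\bigl({^U\!d}_3^{0,2k}(c_k)\bigr) = {^T\!d}_3^{0,2k}\bigl(\sigma_k(v_1,\ldots,v_n)\bigr) = \nabla(\sigma_k)\,x_1,
\]
where the last equality uses Proposition \ref{prop: d3 of T}(1).

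Next I would carry out the elementary symmetric-function computation $\nabla(\sigma_k) = (n-k+1)\sigma_{k-1}$. Using the standard identity $\partial \sigma_k/\partial v_i = \sigma_{k-1}(v_1,\ldots,\widehat{v_i},\ldots,v_n)$, a fixed monomial $v_{j_1}\cdots v_{j_{k-1}}$ of $\sigma_{k-1}$ appears in $\sum_i \sigma_{k-1}(v_1,\ldots,\widehat{v_i},\ldots,v_n)$ for exactly the $n-(k-1)$ indices $i\notin\{j_1,\ldots,j_{k-1}\}$, giving the asserted coefficient. Combined with the previous display, this shows that $B\psi^*$ sends both ${^U\!d}_3^{0,2k}(c_k)$ and $(n-k+1)c_{k-1}x_1$ to the same element of $^T\!E_3^{3,2k-2}$.

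To conclude, I would use that the map $^U\!E_3^{3,2k-2} \to {^T\!E}_3^{3,2k-2}$ is injective. Since $H^{2k-2}(BU_n)$ is free abelian and $H^3(K(\Z,3))$ is the infinite cyclic group generated by $x_1$, the universal coefficient theorem identifies both $E_3$-pages in this bidegree with the tensor product $H^3(K(\Z,3))\otimes H^{2k-2}(-)$; the map is then $\mathrm{id}\otimes B\psi^*$, which is injective because $B\psi^*$ embeds $H^*(BU_n)$ onto the symmetric polynomials in $\Z[v_1,\ldots,v_n]$. This forces ${^U\!d}_3^{0,2k}(c_k) = (n-k+1)c_{k-1}x_1$, as required. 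There is no genuine obstacle here; the only point worth keeping in mind is the injectivity at the $E_3$-level, which is immediate from the torsion-free structure of the groups involved.
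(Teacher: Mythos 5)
Your argument is correct: the naturality comparison with $^T\!E$ via $\Psi^*$, the computation $\nabla(\sigma_k)=(n-k+1)\sigma_{k-1}$, and the injectivity of $\mathrm{id}\otimes B\psi^*$ on the torsion-free entry $E_3^{3,2k-2}$ are all sound. The paper does not reprove this corollary but cites \cite{gu2019cohomology}, and your derivation is essentially the one given there, so there is nothing further to add.
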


Another useful result by Crowley and Gu gives a nice description of $K_n$.
\begin{thm}[\cite{crowley2021h}, Theorem 1.3]\label{thm:d^0}
    There are natural isomorphisms 
    \[K_n\cong\opn{Ker}({^U\!d}_3^{0,*})={^U\!E}_4^{0,*}={^U\!E}_\infty^{0,*}\cong \opn{Im}(Bq)^*\cong H^*(BT_{PU_n})^W.\]
\end{thm}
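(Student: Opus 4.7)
The plan is to establish the chain of isomorphisms by combining edge-map arguments in the Serre spectral sequence $^U\!E$, a comparison to $^T\!E$, and an explicit identification of the relevant subrings of $H^*(BT^n)$.

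The core observation is that the injection $B\psi^*:H^*(BU_n)\hookrightarrow H^*(BT^n)$, sending $c_i$ to the $i$th elementary symmetric polynomial $\sigma_i(v_1,\ldots,v_n)$, intertwines ${^U\!d}_3^{0,*}$ with $\nabla\cdot x_1$: this follows from Corollary \ref{cor:d3} together with Proposition \ref{prop: d3 of T}(1) and the identity $\nabla\sigma_k=(n-k+1)\sigma_{k-1}$. Hence $B\psi^*$ maps $\opn{Ker}({^U\!d}_3^{0,*})$ isomorphically onto $\opn{Ker}\nabla\cap H^*(BT^n)^W$. By Proposition \ref{prop: d3 of T}(2), ${^T\!E}_\infty^{0,*}=\opn{Ker}\nabla=\mathbb{Z}[v_1-v_n,\ldots,v_{n-1}-v_n]$, and using $PT^n\cong T^{n-1}$ this is identified, via the injective edge map of $^T\!E$, with $H^*(BPT^n)$. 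Taking $W$-invariants gives $\opn{Ker}\nabla\cap H^*(BT^n)^W\cong H^*(BPT^n)^W=H^*(BT_{PU_n})^W$, which yields the ring isomorphism $\opn{Ker}({^U\!d}_3^{0,*})\cong H^*(BT_{PU_n})^W$.

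For the edge-map identifications: $\opn{Ker}({^U\!d}_3^{0,*})={^U\!E}_4^{0,*}$ is immediate since no $d_3$-differential in $^U\!E_3$ targets the 0-row, while ${^U\!E}_\infty^{0,*}\cong\opn{Im}(Bq)^*$ is the standard description of the edge homomorphism of $^U\!E$. Finally, for $K_n\cong\opn{Im}(Bq)^*$: since $H^*(BU_n)$ is torsion-free, $Bq^*$ kills torsion and descends to a surjection $K_n\twoheadrightarrow\opn{Im}(Bq)^*$; this is injective because by \eqref{equ:torsion-free}, $Bq^*\otimes\mathbb{Q}$ is the natural inclusion $\mathbb{Q}[c_2,\ldots,c_n]\hookrightarrow\mathbb{Q}[c_1,\ldots,c_n]$, so $\opn{Ker}(Bq^*)$ consists exactly of the torsion.

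The main obstacle is the substantive equality ${^U\!E}_4^{0,*}={^U\!E}_\infty^{0,*}$---the degeneration of the 0-row of $^U\!E$ at $E_4$. The inclusion ${^U\!E}_\infty^{0,*}\subseteq{^U\!E}_4^{0,*}$ is free; a naive comparison via the map of spectral sequences $^U\!E\to{^T\!E}$ induced by $\Psi:T\to U$ only shows that the image of any $d_r\alpha$ in $^T\!E_r^{r,*}$ vanishes for $\alpha\in{^U\!E}_r^{0,*}$ with $r\ge 4$, and since this comparison map need not be injective on ${^U\!E}_r^{r,*}$, it does not force $d_r\alpha=0$. To close this gap, I would construct explicit preimages in $H^*(BPU_n)$ of a generating set of $\opn{Ker}({^U\!d}_3^{0,*})$---for example the $d_3$-closed classes analogous to the $e_i$ introduced in Section \ref{sec: proof of thm 2}---thereby exhibiting $\opn{Ker}({^U\!d}_3^{0,*})\subseteq\opn{Im}(Bq)^*$ directly, and hence forcing equality.
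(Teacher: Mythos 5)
The paper does not actually prove this statement: it is imported wholesale from Crowley--Gu \cite{crowley2021h}, so there is no internal proof to compare against. Judged on its own merits, your reduction is correctly organized. The links $K_n\cong\opn{Im}(Bq)^*$ (via \eqref{equ:torsion-free} and torsion-freeness of $H^*(BU_n)$), $\opn{Im}(Bq)^*={^U\!E}_\infty^{0,*}$ (edge homomorphism), ${^U\!E}_4^{0,*}=\opn{Ker}({^U\!d}_3^{0,*})$, and $\opn{Ker}({^U\!d}_3^{0,*})\cong\opn{Ker}\nabla\cap H^*(BT^n)^W\cong H^*(BPT^n)^W$ are all sound, and you correctly isolate the one substantive claim, the collapse ${^U\!E}_4^{0,*}={^U\!E}_\infty^{0,*}$. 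You are also right that the naive comparison with ${^T\!E}$ fails there: $\Psi^*$ is injective on $E_2$, but on the subquotients ${^U\!E}_r^{r,t-r+1}$ for $r\ge 4$ a class can become a boundary in ${^T\!E}$ without being one in ${^U\!E}$.

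The gap is that your proposed repair of that step is not a proof but a restatement of the problem. An element of $\opn{Ker}({^U\!d}_3^{0,*})$ admits a preimage under $(Bq)^*$ if and only if it survives to ${^U\!E}_\infty^{0,*}$, i.e.\ if and only if all higher differentials vanish on it; so ``constructing explicit preimages of a generating set'' is exactly the assertion to be established. The classes $e_i$ of Section \ref{sec: proof of thm 2} cannot supply these preimages: they are defined abstractly inside $K_n=H^*(BPU_n)/\mathrm{tors}$ using only the rational isomorphism, and the rational argument yields only that $(Bq)^*(K_n)$ sits inside $\opn{Ker}({^U\!d}_3^{0,*})$ with finite index in each degree --- the content of the theorem is precisely that this index is $1$. (Indeed, in this paper the explicit polynomial formulas for $e_4$, $\alpha_6$, $\beta_6$ are obtained by solving ${^U\!d}_3^{0,*}=0$ and invoking Theorem \ref{thm:d^0}, so using them here would be circular.) Closing the gap requires a genuinely new input, e.g.\ the analysis in \cite{crowley2021h} of the possible torsion targets ${^U\!E}_r^{r,t-r+1}$, $r\ge 4$, or an a priori construction of classes in $H^*(BPU_n)$ (characteristic classes of $PU_n$-representations, transfers, etc.) mapping onto ring generators of $H^*(BT_{PU_n})^W$. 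As written, your argument establishes only the inclusion ${^U\!E}_\infty^{0,*}\subseteq{^U\!E}_4^{0,*}$ together with the rational statement.
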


\section{Proof of theorem \ref{thm: tor subgroup}}
\label{sec: proof of thm 1}
From \cite{gu_zzz,zzz,zhang2024} we know that in dimensions $12,13,14$, the only possible nontrivial $_pH^*(BPU_n)$ for an odd prime $p$ is $_5H^{12}(BPU_n)=\Z/\gcd(5,n)\{y_{5,0}\}$. So we only consider the $2$-primary subgroups.

In this section we provide an explicit calculation of $2$-primary subgroups of the cohomology of $BPU_n$ 
in dimensions $12$ to $14$ via the Serre spectral sequence $^U\!E$. Since $_pH^* (BPU_n)\cong {_p[H^*(BPU_n)_{(p)}]}$ for any prime $p$, it suffices to look at the $2$-localized Serre spectral sequence, where the $E_2$-page becomes
\[
    ({^U\!E}^{s,t}_{2})_{(2)}=H^{s}(K(\Z,3))_{(2)} \otimes H^{t}(BU_{n}).
\]

\begin{notation*}
	Recall that if $2\nmid n$, then $H^*(BPU_n)_{(2)}=0$. So throughout the rest of this section, we assume that $n>2$ is always even. We
    also use ${^U\!} E, {^T\!}E$ and $^K\!E$ to denote the corresponding $2$-localized Serre spectral sequences. 
\end{notation*}
We divide the proof of Theorem \ref{thm: tor subgroup} into two parts. Firstly, we compute the $2$-primary subgroups of $H^*(BPU_n)$ in
dimensions $12,13$ by computing the spectral sequence $^U\!E$. Secondly, 
we determine the $2$-primary subgroup of $H^{14}(BPU_n)$, using more topological arguments.

\subsection{The $2$-primary subgroups of $H^{12}(BPU_n)$ and $H^{13}(BPU_n)$}\label{sec: 12-13}
The determination of these groups needs the following
result of $^U\!E_{\infty}$-page.

\begin{lem}
    \label{lem: E inf}
    In the spectral sequence $^U\!E$,  we have 
    $${^U\!E}^{12,0}_{\infty}={^U\!E}^{13,0}_{\infty}=\Z/2,$$
    and
    $${^U\!E}^{10,2}_{\infty}={^U\!E}_{\infty}^{3,10}={^U\!}E_{\infty}^{6,6}={^U\!}E_{\infty}^{9,4}={^U\!}E_{\infty}^{12,2}=0.$$
\end{lem}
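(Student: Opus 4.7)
The strategy is to carry out explicit computations on ${^U\!E}$, primarily using Corollary~\ref{cor:d3} for the $d_3$-differentials and invoking Propositions~\ref{pro:diff1} and~\ref{prop: dT vn x1} for the single higher differential that is needed. As a first step I would identify the $E_2$-terms. Since $y_{3,0}$ and $y_{5,0}$ are odd-torsion, $H^*(K(\Z,3))_{(2)}$ in degrees at most $13$ has additive basis $\{1,x_1,x_1^2,x_1^3,y_{2,1},x_1^4,x_1y_{2,1}\}$, with $1$ and $x_1$ free and the remaining five classes of order $2$. Tensoring with $H^*(BU_n)$ then pins down the seven relevant $E_2$-terms: ${^U\!E}_2^{12,0}=\Z/2\{x_1^4\}$, ${^U\!E}_2^{13,0}=\Z/2\{x_1y_{2,1}\}$, ${^U\!E}_2^{10,2}=\Z/2\{y_{2,1}c_1\}$, ${^U\!E}_2^{12,2}=\Z/2\{x_1^4c_1\}$, and small-rank $\Z/2$-vector spaces at $(3,10)$, $(6,6)$, $(9,4)$.

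Next I would run the $d_3$-differential. Using Corollary~\ref{cor:d3} together with the Leibniz rule and the hypothesis that $n$ is even, one checks directly that both incoming and outgoing $d_3$ at $(12,0)$ and $(13,0)$ vanish (for instance $d_3(x_1^3c_1)=nx_1^4\equiv 0\pmod 2$). For the dying positions I would list a basis of each $E_2^{s,t}$, compute $\ker d_3$ and the image of the incoming $d_3$, and verify that they coincide, so that $E_4^{s,t}=0$. The cleanest instance is $(12,2)=\Z/2\{x_1^4c_1\}$, which is killed by $d_3(x_1^3c_2)\equiv x_1^4c_1\pmod 2$ arriving from $(9,4)$; analogous pairings under $d_3$ dispose of $(9,4)$, $(6,6)$, and $(3,10)$ at the $E_4$-page.

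The remaining position $(10,2)=\Z/2\{y_{2,1}c_1\}$ survives $d_3$ (since $d_3(y_{2,1}c_1)=n\,y_{2,1}x_1=0$), so a higher differential is needed. A direct $d_3$ computation at $(3,8)$ yields ${^U\!E}_4^{3,8}=\Z/2\{x_1c_2^2\}$; all even-rank differentials vanish for parity reasons and the only candidate $d_5$ targets a zero group, so this class persists to $E_7$. The needed input is then Proposition~\ref{prop: dT vn x1}(2), ${^T\!d}_7(2v_n^3x_1)=y_{2,1}$. Combined with naturality along $\Psi^*\colon {^U\!E}\to {^T\!E}$ and the expansion $c_2=\sigma_2(v)=\sum_{i<j}v_iv_j$, this yields ${^U\!d}_7(x_1c_2^2)=y_{2,1}c_1$, so $(10,2)$ dies at $E_8$.

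The main obstacle is this last naturality step. Since $v_n^3$ is not in $B\psi^*(H^*(BU_n))$, one cannot pull ${^T\!d}_7(2v_n^3x_1)=y_{2,1}$ directly back to ${^U\!E}$. Instead, the plan is to expand $\sigma_2(v)^2$ into monomials, identify each monomial's ${^T\!d}_7$-image via Propositions~\ref{prop:diff0} and~\ref{pro:diff1}, and then recognize the sum as $y_{2,1}\cdot\sigma_1(v)=B\psi^*(y_{2,1}c_1)$ in ${^T\!E}_7^{10,2}$. Verifying that no competing higher differentials at the $E_7$-page interfere with this identification is the delicate bookkeeping from which Lemma~\ref{lem: E inf} then follows.
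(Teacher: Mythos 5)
Your overall architecture (kill most entries with $d_3$ via Corollary~\ref{cor:d3}, then use one $d_7$ computed by comparison with ${^T\!E}$) matches the paper's, but there is a genuine gap, and it occurs precisely in the case $n\equiv 0\bmod 4$. The entry $(3,10)$ does \emph{not} die at the $E_4$-page in that case: computing $\ker{^U\!d}_3^{3,10}/\operatorname{Im}{^U\!d}_3^{0,12}$ (an $11\times 7$ integer matrix over $\Z_{(2)}$) gives ${^U\!E}_4^{3,10}=0$ only for $n\equiv 2\bmod 4$, while for $n\equiv 0\bmod 4$ one gets ${^U\!E}_4^{3,10}\cong\Z/2\{2c_4c_1x_1\}$; e.g.\ the relation coming from $d_3(c_3^2)$ contributes $(n-1)(n-2)$ times the generator, which has $2$-adic valuation exactly $1$ when $n\equiv 0\bmod 4$. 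Killing this class requires the additional differential ${^U\!d}_7^{3,10}(2c_4c_1x_1)=\binom{n-1}{3}c_1^2y_{2,1}$ into position $(10,4)$, again obtained by comparison with ${^T\!E}$ using Proposition~\ref{prop: dT vn x1}; this is the bulk of the paper's proof and is absent from your plan.

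The same congruence class also breaks your treatment of $(10,2)$. Carrying out your own expansion of $c_2^2x_1$ in the variables $v_i'=v_i-v_n$ and $v_n$, the coefficient of $v_n^3x_1$ is $2(n-1)\binom{n}{2}\sum_i v_i'$, so Proposition~\ref{prop: dT vn x1}(2) yields ${^U\!d}_7^{3,8}(c_2^2x_1)=\binom{n}{2}c_1y_{2,1}$, not $c_1y_{2,1}$: this vanishes exactly when $n\equiv 0\bmod 4$. In that case $c_1y_{2,1}$ is instead hit by ${^U\!d}_7^{3,8}(2c_4x_1)=\binom{n-1}{3}c_1y_{2,1}$ (the two binomial coefficients have complementary parities as $n$ ranges over the even residues mod $4$), so you need both source classes, as in \cite[Lemma 8.2]{fan2024operators}. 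Finally, for $(12,0)$ and $(13,0)$ checking $d_3$ alone is not enough: there are potential incoming differentials $d_9\colon E_9^{3,8}\to E_9^{12,0}$ and $d_{13}\colon E_{13}^{0,12}\to E_{13}^{13,0}$. The first is excluded only once you know $E_8^{3,8}=0$ (which depends on the corrected $d_7$ above), and the second requires Theorem~\ref{thm:d^0}; the paper avoids both by citing the nonvanishing of $x_1^4$ and $y_{2,1}x_1$ in $H^*(BPU_n)$ from \cite[Proposition 6.4]{fan2024cohomology}, which forces these bottom-row classes to survive outright.
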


\begin{proof}[Proof of Theorem \ref{thm: tor subgroup} for degrees $12$, $13$]
The nontrivial entries in ${^U\!E}_2^{*,*}$ of
total degree 12 are ${^U\!E}_2^{0,12}$, ${^U\!E}_2^{6,6}$,
${^U\!E}_2^{10,2}$ and ${^U\!E}_2^{12,0}$.
By Lemma \ref{lem: E inf},
${^U\!E}_{\infty}^{6,6}={^U\!E}_{\infty}^{10,2}=0$. Hence, we have a short exact sequence of $\Z_{(2)}$-modules
$$0\to {^U\!E}_{\infty}^{12,0} \to H^{12}(BPU_n)_{(2)}
\to {^U\!E}_{\infty}^{0,12} \to 0.$$
Since ${^U\!E}_{\infty}^{0,12} \subset {^U\!E}_2^{0,12}$
is a free $\Z_{(2)}$-module, the above short exact 
sequence is split and we have
$$H^{12}(BPU_n)_{(2)}\cong {^U\!E}_{\infty}^{12,0} \oplus
{^U\!E}_{\infty}^{0,12},$$
from which and Lemma \ref{lem: E inf} we deduce
$${_2 H}^{12}(BPU_n)\cong {^U\!E}_{\infty}^{12,0}\cong \Z/2.$$

Similarly, the nontrivial entries in ${^U\!E}_2^{*,*}$ of
total degree 13 are ${^U\!E}_2^{3,10}$, ${^U\!E}_2^{9,4}$
and ${^U\!E}_2^{13,0}$.
By Lemma \ref{lem: E inf},
${^U\!E}_{\infty}^{3,10}={^U\!E}_{\infty}^{9,4}=0$. Hence, 
by Lemma \ref{lem: E inf},
we have
$${_2 H}^{13}(BPU_n)= H^{13}(BPU_n)_{(2)}={^U\!E}_{\infty}^{13,0}=\Z/2.$$    
\end{proof}

The proof of the lemma involves basic calculations, which we will now provide in detail.
\begin{proof}[Proof of Lemma \ref{lem: E inf}]
Consider the following complex in the $E_3$ page:
    $${^U\!}E_3 ^{3,8}\xrightarrow{^U\!d_3 ^{3,8}}
    {^U\!}E_3 ^{6,6}\xrightarrow{^U\!d_3 ^{6,6}}
    {^U\!}E_3 ^{9,4}\xrightarrow{^U\!d_3 ^{9,4}}
    {^U\!}E_3 ^{12,2}.$$
Using Proposition \ref{prop: p local cohomology of KZ3 below 2p+5} and Corollary \ref{cor:d3}, an immediate calculation shows that
        $${^U\!}E_4^{12,2}={^U\!}E_4^{9,4}={^U\!}E_4^{6,6}=0.$$
It follows that ${^U\!}E_{\infty}^{12,2}={^U\!}E_{\infty}^{9,4}={^U\!}E_{\infty}^{6,6}=0$.

To determine ${^U\!E_{\infty}^{3,10}}$, we first consider the following complex
\[^U\!E_3 ^{0,12}\xrightarrow{^U\!d_3 ^{0,12}}
{^U\!}E_3 ^{3,10}\xrightarrow{^U\!d_3 ^{3,10}}
{^U\!}E_3 ^{6,8},\]
where
\begin{equation*}\label{eq:basis}
    \begin{split}
        ^U\!E_3 ^{0,12} = & \ \Z_{(2)} \{c_6, c_5 c_1, c_4 c_2, c_3^2, c_4 c^2 _1, c_3 c_2 c_1, c_2^3, c_3 c_1^3, c_2^2 c_1^2,
        c_2 c_1^4, c_1^6 \},\\
        ^U\!E_3 ^{3,10} = & \ \Z_{(2)} \{c_5x_1, c_4 c_1x_1, c_3 c_2x_1, c_3 c^2 _1x_1,
        c_2^2 c_1x_1, c_2 c_1^3x_1, c_1^5x_1 \},\\
        ^U\!E_3 ^{6,8} = & \ \Z/2 \{c_4x_1^2, c_3 c_1x_1^2, c_2^2x_1^2, c_2 c^2_1x_1^2, c_1^4x_1^2 \}.
    \end{split}
\end{equation*}
An immediate computation shows that
$$\opn{Ker} {^U\!d}_3 ^{3,10}=\Z_{(2)}\{c_5x_1, 2c_4 c_1x_1, c_4 c_1x_1+c_3 c_2x_1, c_3 c^2 _1x_1, c_2^2 c_1x_1, 2c_2 c_1^3x_1, c_1^5x_1\}.$$
Using the two bases above, we have the following matrix corresponding to ${^U\!d}_3^{0,12}$
	\begin{equation*}
M_2 =
\begin{pmatrix}
n-5 & 0 & 0 & 0 & 0 & 0 & 0\\
n & n-4 & 0 & 0 & 0 & 0 & 0\\
0 & n-1 & n-3 & 0 & 0 & 0 & 0\\
0 & 0 & 2(n-2) & 0 & 0 & 0 & 0\\
0 & 2n & 0 & n-3 & 0 & 0 & 0\\
0 & 0 & n & n-1 & n-2 & 0 & 0\\
0 & 0 & 0 & 0 & 3(n-1) & 0 & 0\\
0 & 0 & 0 & 3n & 0 & n-2 & 0\\
0 & 0 & 0 & 0 & 2n & 2(n-1) & 0\\
0 & 0 & 0 & 0 & 0 & 4n & n-1\\
0 & 0 & 0 & 0 & 0 & 0 & 6n\\
\end{pmatrix}
\end{equation*}
which is row equivalent to 
\begin{equation*}
\begin{pmatrix}
1 & 0 & 0 & 0 & 0 & 0 & 0\\
0 & 1 & -1 & 0 & 0 & 0 & 0\\
0 & 0 & 2 & 0 & 0 & 0 & 0\\
0 & 0 & 0 & 1 & 0 & 0 & 0\\
0 & 0 & 0 & 0 & 1 & 0 & 0\\
0 & 0 & 0 & 0 & 0 & 2 & 0\\
0 & 0 & 0 & 0 & 0 & 0 & 1\\
0 & 0 & 0 & 0 & 0 & 0 & 0\\
0 & 0 & 0 & 0 & 0 & 0 & 0\\
0 & 0 & 0 & 0 & 0 & 0 & 0\\
0 & 0 & 0 & 0 & 0 & 0 & 0\\
\end{pmatrix},\quad\quad
\begin{pmatrix}
1 & 0 & 0 & 0 & 0 & 0 & 0\\
0 & 1 & -1 & 0 & 0 & 0 & 0\\
0 & 0 & 4 & 0 & 0 & 0 & 0\\
0 & 0 & 0 & 1 & 0 & 0 & 0\\
0 & 0 & 0 & 0 & 1 & 0 & 0\\
0 & 0 & 0 & 0 & 0 & 2 & 0\\
0 & 0 & 0 & 0 & 0 & 0 & 1\\
0 & 0 & 0 & 0 & 0 & 0 & 0\\
0 & 0 & 0 & 0 & 0 & 0 & 0\\
0 & 0 & 0 & 0 & 0 & 0 & 0\\
0 & 0 & 0 & 0 & 0 & 0 & 0\\
\end{pmatrix},
\end{equation*}
for \textbf{(a)} $n\equiv 2\mod 4$ and 
\textbf{(b)} $n\equiv 0\mod 4$, respectively. Therefore,
\begin{equation*}
    {^U\!}E_7^{3,10}={^U\!}E_4^{3,10}\cong\opn{Ker} {^U\!}d_3 ^{3,10}/\opn{Im} {^U\!}d_3 ^{0,12}\cong
    \begin{cases}
        0,\ &\text{ if } n\equiv2\mod4,\\
        \Z/2\{2c_4c_1x_1\},\ &\text{ if } n\equiv0\mod4,
    \end{cases}    
\end{equation*}
where the first equality comes from \eqref{eq:degree change} for degree reasons.

Now we analyze the differential ${^U\!}d_7 ^{3,10}:
{^U}\!E_7^{3,10}\rightarrow {^U}\!E_7^{10,4}$ for $n\equiv 0$ mod $4$, where
${^U}\!E_7^{10,4}\subset {^U}\!E_4^{10,4}= \Z/2 \{c_2 y_{2,1}, c_1^2 y_{2,1}\}$ due to the degree reason. Instead of computing it directly, we use the map $\Psi^*: {^U\! E}\to {^T\! E}$ of spectral sequences to consider its image in ${^T\! E}$. That is
\begin{equation}\label{eq:example}
	\begin{split}
		\Psi^*{^U\!d}_7^{3,10}(2c_4c_1x_1) 
             =\ & {^T\!d}_{7}^{3,10}\Psi^{*}(2c_{4}c_1x_1)\\
             =\ & {^T\!d}_{7}^{3,10}[2
             (\sum_{n\geq i_1 > \cdots> i_4 \geq 1}
             v_{i_1}\cdots v_{i_4})(\sum_{k=1}^n v_k)x_1].
	\end{split}
\end{equation}

For $1\le i \le n$, let $v'_i =v_i-v_{n}$. It follows from (2) of Proposition \ref{prop: d3 of T} that the $v'_i$'s are permanent cycles. Then we can rewrite \eqref{eq:example} as

\begin{equation*}
    \begin{split}
        & \Psi^*{^U\!d}_7 ^{3,10}(2c_{4}c_1x_1)\\
             =\ & {^T\!d}_{7}^{3,10}[2(\sum_{n\geq i_1 > \cdots> i_4 \geq 1}(v'_{i_1}+v_n)\cdots (v'_{i_4}+v_n))(\sum_{k=1}^n v_k'+nv_n)x_1]\\
             =\ & {^T\!d}_{7}^{3,10}[2(\sum_{n\geq i_1 > \cdots> i_4 \geq 1}\sum_{j=0}^{4}\sigma_j(v'_{i_1},\cdots,v'_{i_4})v_n^{4-j})
             (\sum_{k=1}^n v_k'+nv_n)x_1]\\
              =\ & {^T\!d}_{7}^{3,10}
              [2(\sum_{n\geq i_1 > \cdots> i_4 \geq 1} \sigma_1 (v'_{i_1},\cdots,v'_{i_4}))(\sum_{k=1}^n v_k')v_n^{3}x_1]\\
              & + {^T\!d}_{7}^{3,10}
              [2(\sum_{n\geq i_1 > \cdots> i_4 \geq 1} \sigma_2 (v'_{i_1},\cdots,v'_{i_4}))nv_n^{3}x_1]\\
              =\ & (\sum_{n\geq i_1 > \cdots> i_4 \geq 1} \sigma_1 (v'_{i_1},\cdots,v'_{i_4})) (\sum_{k=1}^n v_k')
                 y_{2,1},\\
\end{split}
\end{equation*}
where the $3$rd and $4$th equations is due to Proposition \ref{prop: dT vn x1}
and the fact that $y_{2,1}$ is $2$-torsion. Then we have
\begin{equation}
\label{equ:d r 2c4c1x1'}
    \begin{split}
             \Psi^*{^U\!d}_7 ^{3,10}(2c_{4}c_1x_1)
             =&\tbinom{n-1}{3} (\sum_{k=1}^n v'_k)^2 y_{2,1}=\tbinom{n-1}{3} (\sum_{k=1}^n v_k -nv_n)^2 y_{2,1}\\
             =&\tbinom{n-1}{3} (\sum_{k=1}^n v_k)^2  y_{2,1}=\Psi^* (\tbinom{n-1}{3} c_1^2 y_{2,1}).
    \end{split}
\end{equation}

Obviously, the  map 
$\Psi^{*}: {^U\!E}_2^{10,4}\to {^T\!E}_2^{10,4}$ is injective. Since ${^U}\!E_7^{10,4}\subset {^U}\!E_2^{10,4}$ and ${^T}\!E_7^{10,4}\subset {^T}\!E_2^{10,4}$,
the induced map $\Psi^{*}: {^U\!E}_7^{10,4}\to {^T\!E}_7^{10,4}$ is also injective. Thus,
\eqref{equ:d r 2c4c1x1'} shows that
\begin{equation}\label{equ: d7 2 c4c1 x1}
    {^U\!d}_7 ^{3,10}(2c_{4}c_1x_1)=\tbinom{n-1}{3} c_1^2 y_{2,1}.
\end{equation}
Since $y_{2,1}$ is $2$-torsion, we have $c_1^2y_{2,1}=e_2y_{2,1}$, where $e_2$ is the element in \eqref{equ: e2e3}. Note that both $e_2$ and $y_{2,1}$ are permanent
elements \cite{gu2019cohomology}.
Thereofore, the differential
\eqref{equ: d7 2 c4c1 x1} is nontrivial when $n\equiv0$ mod $4$. Hence ${^U}\!E_{\infty}^{3,10}={^U}\!E_8^{3,10}=0$.

Similarly to \eqref{equ: d7 2 c4c1 x1}, we have the following differentials (see \cite[Lemma 8.2]{fan2024operators} for explicit computation):
\begin{equation}\label{equ: d7 2 c4 x1 and d7 c2 c2 x1}
\begin{split}
    {^U\!d}_7 ^{3,8}(2c_{4}x_1)= & \ \tbinom{n-1}{3} c_1 y_{2,1},\\
    {^U\!d}_7 ^{3,8}(c_2^2 x_1)= & \ \tbinom{n}{2}c_1y_{2,1}.
\end{split}
\end{equation}
Since ${^U\!E}^{10,2}_3 =\Z/2 \{ c_1 y_{2,1} \}$, \eqref{equ: d7 2 c4 x1 and d7 c2 c2 x1} gives ${^U\!E}^{10,2}_{\infty}=0$.

Finally, by \cite[Proposition 6.4]{fan2024cohomology}, 
$x_1^4$, $y_{2,1}x_1\in H^*(BPU_n)$ are nonzero, which gives
${^U\!E}^{12,0}_{\infty}={^U\!E}^{12,0}_2=\Z/2 \{x_1^4\}$ and ${^U\!E}^{13,0}_{\infty}={^U\!E}^{13,0}_2 =\Z/2\{y_{2,1}x_1\}$.
\end{proof}

\subsection{The $2$-primary subgroup of $H^{14}(BPU_n)$}
Note that the nontrivial entries in ${^U\!E}_2^{*,*}$ of
total degree 14 are ${^U\!E}^{0, 14}_2$, ${^U\!E}^{6, 8}_2$, ${^U\!E}^{10, 4}_2$ and ${^U\!E}^{12, 2}_2$. Moreover, by Lemma \ref{lem: E inf}, 
${^U\!E}^{12, 2}_{\infty}=0$.

Consider the following complex in the $^U\!E_3$ page
\[^U\!E_3 ^{3,10}\xrightarrow{^U\!d_3 ^{3,10}}
{^U\!}E_3 ^{6,8}\xrightarrow{^U\!d_3 ^{6,8}}
{^U\!}E_3 ^{9,6}.\]
An immediate computation by Proposition \ref{prop: p local cohomology of KZ3 below 2p+5} and Corollary \ref{cor:d3} shows that
$${^U\!E}^{6,8}_{9}={^U\!E}^{6,8}_{4}\cong\Z/2\{c_2^2x_1^2\}.$$
By computing the kernel of ${^U\!d_3 ^{10,4}}:{^U\!}E_3 ^{10,4}\to 
{^U\!}E_3 ^{13,2}$, we have ${^U\!}E_4 ^{10,4}=\Z/2\{c_1^2y_{2,1}\}$. 
Recall that $c_1^2y_{2,1}=e_2y_{2,1}$ is a permanent cycle, since both $y_{2,1}$ and $e_2$ are permanent elements.
Due to the degree reason and the differential \eqref{equ: d7 2 c4c1 x1}, we have
\begin{equation}\label{eq:E{10,4}}
    {^U\!E}^{10,4}_{\infty}={^U\!E}^{10,4}_{8}\cong
        \begin{cases}
        0,\ &\text{ if } n\equiv0\mod4,\\
        \Z/2\{c_1^2y_{2,1}\},\ &\text{ if } n\equiv2\mod4. 
    \end{cases} 
\end{equation}
Hence, in order to determine $_2H^{14}(BPU_n)$, we only need to compute the
differential ${^U\!d}^{6, 8}_{9}(c_2^2x_1^2)$.

Firstly, suppose that $n\equiv 0$ mod $4$. We claim that ${^U\!d}_{9}^{6,8}(c_2^2x_1^2)=0$. 

Indeed, by Proposition \ref{prop: p local cohomology of KZ3 below 2p+5} and an easy calculation by Corollary \ref{cor:d3}, we have 
$${^U\!E}_{9}^{15,0}={^U\!E}_2^{15,0}=\Z/2\{x_1^5\}\oplus\Z/2\{y_{2,(0,1)}\}.$$ 
So it suffices to prove that $x_1^5$ and $y_{2,(0,1)}$ are linearly independent elements in $H^{15}(BPU_n)$.

\begin{prop}
    If $n\equiv0$ mod $4$, then $x_1^5$ and $y_{2,(0,1)}$ are linearly independent in $H^{15}(BPU_n)$.
\end{prop}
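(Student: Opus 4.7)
The statement is $2$-primary in nature, so we work $2$-locally throughout. Both $x_1^5$ and $y_{2,(0,1)}$ live in the bottom row ${^U\!E}_\infty^{15,0}$ of the Serre spectral sequence, where outgoing differentials automatically vanish (their targets would sit in negative fiber-degree). Accordingly the proposition amounts to showing that no incoming differential ${^U\!d}_r\colon {^U\!E}_r^{15-r,r-1}\to {^U\!E}_r^{15,0}$, for $r\ge 3$, hits a nonzero $\mathbb{F}_2$-linear combination of $x_1^5$ and $y_{2,(0,1)}$; equivalently, we need ${^U\!E}_\infty^{15,0}{}_{(2)} = \Z/2\{x_1^5\}\oplus\Z/2\{y_{2,(0,1)}\}$.

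My plan is to reduce to the single case $n=4$ by naturality, exploiting that the hypothesis $4\mid n$ furnishes a canonical embedding of classifying spaces. Specifically, the homomorphism $F\colon U_4\to U_n$, $A\mapsto A\otimes I_{n/4}$, sends the central scalar $\lambda I_4$ to $\lambda I_n$, i.e.\ restricts to the identity on centres. Hence it descends to $f\colon PU_4\to PU_n$, and the induced diagram
\[
\begin{tikzcd}
BPU_4 \arrow[r,"Bf"] \arrow[rd,"\chi_4"'] & BPU_n \arrow[d,"\chi_n"]\\
& K(\Z,3)
\end{tikzcd}
\]
is homotopy commutative. Consequently $(Bf)^{*}$ carries $x_1, y_{2,(0,1)}\in H^{*}(BPU_n)$ (both pulled back from $K(\Z,3)$) to the identically named classes in $H^{*}(BPU_4)$, and any nontrivial $\mathbb{F}_2$-linear relation between $x_1^5$ and $y_{2,(0,1)}$ in $H^{15}(BPU_n)$ would pull back to such a relation in $H^{15}(BPU_4)$.

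It therefore suffices to verify the linear independence of $x_1^5$ and $y_{2,(0,1)}$ in the single group $H^{15}(BPU_4)$, which is accessible from the complete determination of $H^{*}(BPU_4;\Z)$ in \cite{fan2024cohomology}. The main obstacle is this last $n=4$ check: since $y_{2,(0,1)}$ is a secondary Cartan-style generator of $H^{15}(K(\Z,3))$, one must confirm from that reference that its image under $\chi_4^{*}$ is genuinely independent from $x_1^5$ in the $2$-torsion subgroup of $H^{15}(BPU_4)$. Should one prefer a self-contained argument, the alternative is a direct page-by-page analysis of ${^U\!E}$: the $r=3$ step is immediate from Corollary~\ref{cor:d3} because $d_3(x_1^4 c_1)=n x_1^5=0$ for $n$ even, and the higher differentials $r=5,7,9,\ldots$ are handled by pulling back to $^{T}\!E$ and applying the $v_i\mapsto v_i'+v_n$ trick already illustrated in the proof of Lemma~\ref{lem: E inf}, together with Propositions~\ref{prop:diff0}, \ref{pro:diff1}, and \ref{prop: dT vn x1}; the divisibility $4\mid n$ is used here to force the relevant binomial coefficients to vanish modulo $2$.
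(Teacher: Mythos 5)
Your proposal is correct and follows essentially the same route as the paper: the paper also reduces to $n=4$ via the block-diagonal embedding $U_4\hookrightarrow U_n$ (your $A\mapsto A\otimes I_{n/4}$), observes that the induced morphism of fibrations is the identity on the base $K(\Z,3)$ so the classes $x_1^5$ and $y_{2,(0,1)}$ pull back to their namesakes, and then cites the known computation of $H^*(BPU_4)$ from \cite{fan2024cohomology}.
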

\begin{proof}
Consider the diagonal map $\Delta:BU_4\to BU_n$ given by
$U_4 \hookrightarrow U_n$:
\begin{equation*}
A\to 
    \begin{pmatrix}
         A & \cdots & 0 & 0\\
         0 & A & \cdots & \vdots\\
         \vdots & \cdots & \ddots & 0\\
         0 & \cdots & \cdots & A\\
    \end{pmatrix},
\end{equation*}
and denote the induced map $BPU_4\to BPU_n$ by $\Delta^{\prime}.$

Note that this diagonal map induces a homomorphism between the Serre spectral sequences of $H^*(BPU_n)$ and $H^*(BPU_4)$, such that its restriction on the bottom row of the $E_2$ pages is the identity. Thus it suffices to handle the case when $n=4$, which follows from \cite[Theorem 2.3]{fan2024cohomology}.
\end{proof}

\begin{cor}
    If $n\equiv0$ mod $4$, then $_2H^{14}(BPU_n)=\Z/2\{e_4x_1^2\}$, where $e_4$ is given in Lemma \ref{lem:H8}.
\end{cor}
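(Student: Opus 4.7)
The plan is to read off $_2H^{14}(BPU_n)$ from the antidiagonal of total degree $14$ in the $2$-localized Serre spectral sequence $^U\!E$, and then pinpoint the surviving class as $e_4 x_1^2$. The only potentially nonzero entries on the $E_2$-antidiagonal are at $(0,14), (6,8), (10,4)$ and $(12,2)$. Three of these are dispatched immediately: Lemma \ref{lem: E inf} gives ${^U\!E}^{12,2}_\infty = 0$, equation \eqref{eq:E{10,4}} with $n \equiv 0 \pmod 4$ gives ${^U\!E}^{10,4}_\infty = 0$, and ${^U\!E}^{0,14}_\infty$ is torsion-free as a subgroup of $H^{14}(BU_n)_{(2)}$. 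For the remaining position $(6,8)$, the preceding proposition asserts that both $x_1^5$ and $y_{2,(0,1)}$ survive in $H^{15}(BPU_n)$; since ${^U\!E}^{15,0}_2 = \Z/2\{x_1^5\}\oplus\Z/2\{y_{2,(0,1)}\}$ already has order $4$, no differential landing at $(15,0)$ can be nontrivial, and in particular ${^U\!d}_9^{6,8}(c_2^2 x_1^2) = 0$. Combining this with the earlier computation ${^U\!E}^{6,8}_9 = \Z/2\{c_2^2 x_1^2\}$, we obtain ${^U\!E}^{6,8}_\infty = \Z/2\{c_2^2 x_1^2\}$. The filtration on $H^{14}(BPU_n)_{(2)}$ therefore has a unique $2$-torsion quotient, and consequently $_2H^{14}(BPU_n) \cong \Z/2$.

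To identify the generator explicitly, I would use that $e_4 \in K_n^8 = {^U\!E}^{0,8}_\infty$ (cf. Lemma \ref{lem:H8}) and $x_1 \in H^3(BPU_n)$ are permanent cycles, so $e_4 x_1^2$ is a well-defined class of filtration $\ge 6$ in $H^{14}(BPU_n)$. By multiplicativity of the spectral sequence, its image in the associated graded ${^U\!E}^{6,8}_\infty$ equals $[e_4] \cdot x_1^2$. Using the explicit form of $e_4$ from Lemma \ref{lem:H8}, the class $[e_4] \in H^8(BU_n)$ reduces modulo $2$ to $c_2^2$ (up to terms killed by earlier differentials), and since $x_1^2$ is $2$-torsion the even-coefficient corrections drop out, giving $e_4 x_1^2 = c_2^2 x_1^2$ in ${^U\!E}^{6,8}_\infty$. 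Hence $e_4 x_1^2$ represents the unique nontrivial class in $_2H^{14}(BPU_n)$.

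The principal substantive obstacle is the vanishing of ${^U\!d}_9^{6,8}(c_2^2 x_1^2)$, which is already absorbed into the preceding proposition (reduced via the diagonal map $\Delta$ to the known $n=4$ case). Once this is in hand, the corollary reduces to a routine assembly of the $E_\infty$-terms together with the multiplicative identification of $e_4 x_1^2$ as the generator of the sole nonzero $2$-torsion filtration quotient.
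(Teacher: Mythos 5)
Your proposal is correct and follows essentially the same route as the paper: it assembles the degree-$14$ antidiagonal from Lemma \ref{lem: E inf}, \eqref{eq:E{10,4}}, and the vanishing of ${^U\!d}_9^{6,8}$ deduced from the linear independence of $x_1^5$ and $y_{2,(0,1)}$, and then identifies $e_4x_1^2$ with $c_2^2x_1^2$ using that $x_1^2$ is $2$-torsion. The only difference is that the paper makes your parenthetical "terms killed by earlier differentials" explicit, namely ${^U\!d}_3^{3,10}(c_4c_1x_1)=c_3c_1x_1^2$ and ${^U\!d}_3^{3,10}(c_2c_1^3x_1)=c_1^4x_1^2$, which dispose of the odd-coefficient terms $c_3c_1$ and $c_1^4$ of $e_4$.
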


\begin{proof}
    It remains to prove that $e_4x_1^2=c_2^2x_1^2$. This is because $x_1^2$ is $2$-torsion, and
    ${^U\!d}_3^{3,10}(c_2c_1^3x_1)= c_1^4x_1^2$, 
    ${^U\!d}_3^{3,10}(c_4c_1x_1)= c_3c_1x_1^2$.
\end{proof}

Next, we assume that $n\equiv2$ mod $4$.
\begin{lem}\label{lem:d_9{6,8}}
If $n\equiv2$ mod $4$, then ${^U\!d}_{9}^{6,8}(c_2^2x_1^2)=x_1^5+y_{2,(0,1)}$.
\end{lem}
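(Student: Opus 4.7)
The plan is to combine a dimension-count obstruction, the comparison map $\Psi^*\colon {^U\!E}\to {^T\!E}$, and naturality under the diagonal embedding $BPU_2\to BPU_n$ to pin down the two components of the differential separately.

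First, I would verify that $c_2^2 x_1^2$ cannot survive to $E_\infty$. By Theorem \ref{thm: tor subgroup} the $2$-primary torsion of $H^{14}(BPU_n)$ is $\Z/2$ for $n\equiv 2\pmod 4$, while \eqref{eq:E{10,4}} and Lemma \ref{lem: E inf} yield $^U\!E_\infty^{10,4}=\Z/2$ and $^U\!E_\infty^{12,2}=0$, saturating that torsion. Therefore $^U\!E_\infty^{6,8}=0$, and since the only possible nonzero differential leaving $E_9^{6,8}$ is $d_9$ (degree reasons), the value $^U\!d_9^{6,8}(c_2^2 x_1^2)$ is a nonzero element of $\Z/2\{x_1^5\}\oplus\Z/2\{y_{2,(0,1)}\}$.

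Second, I would compute the image of this value under $\Psi^*$ to detect the $y_{2,(0,1)}$-component. The crucial observation is that $x_1^5$ already vanishes in ${^T\!E}_4^{15,0}$: by Propositions \ref{prop:diff0}--\ref{pro:diff1} one has ${^T\!d}_3(v_i x_1^4)=x_1\cdot x_1^4=x_1^5$, so $\Psi^*$ annihilates $x_1^5$ but remains injective on $y_{2,(0,1)}$. To compute ${^T\!d}_9^{6,8}(\sigma_2^2 x_1^2)$, I would write
\[\sigma_2 = \sigma_2(u) + (n-1)\sigma_1(u) v_n + \binom{n}{2}v_n^2,\]
with $u_i=v_i-v_n$ the permanent cycles from Proposition \ref{prop: d3 of T}(2), square and multiply by $x_1^2$, and apply the Leibniz rule together with the higher $d_r$-formulas in $^K\!E$ (Proposition \ref{prop:diff0}) translated via $\Phi^*$. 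After reducing modulo $d_3,d_5,d_7$-boundaries using $2x_1^2=0$, $2y_{2,1}=0$, and that the $u_i$'s are permanent, the only surviving contribution is proportional to $\binom{n}{2}^2 y_{2,(0,1)}$; since $\binom{n}{2}=n(n-1)/2$ is odd exactly when $n\equiv 2\pmod 4$, the $y_{2,(0,1)}$-coefficient equals $1$.

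Third, to fix the $x_1^5$-coefficient, I would invoke naturality under the diagonal map $\Delta'\colon BPU_2\to BPU_n$ (well-defined for $n=2m$ since the $m$-fold block-diagonal inclusion $U_2\hookrightarrow U_{2m}$ carries the center into the center). Since $\Delta'$ is the identity on the base $K(\Z,3)$, the $x_1^5$-coefficient of $d_9$ is preserved by the pullback. The same dimension-count argument forces $d_9(c_2^2 x_1^2)\neq 0$ in $BPU_2=BSO(3)$, and using the ring $H^*(BSO(3);\Z)=\Z[p_1,W_3]/(2W_3)$ together with the identification $\chi^*(x_1)=W_3$, a direct Wu-formula computation applied to the Steenrod-monomial description of $y_{2,(0,1)}$ (acting on $w_3=\rho_2(W_3)$) yields $\chi^*(y_{2,(0,1)})=W_3^5$ in $H^{15}(BSO(3))$. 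Hence $\chi^*(x_1^5+y_{2,(0,1)})=0$, and pulling back through $\Delta'^*$ forces the $x_1^5$-coefficient to match the $y_{2,(0,1)}$-coefficient, namely $1$.

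The main obstacle is the ${^T\!E}$-calculation in the second step: expanding $\sigma_2^2 x_1^2$ generates many monomials, and isolating the $y_{2,(0,1)}$-contribution requires both identifying an appropriate $d_9$-formula in ${^K\!E}$ that lifts to ${^T\!E}$ and keeping careful track of binomial coefficients modulo $2$ through the successive reductions modulo earlier boundaries.
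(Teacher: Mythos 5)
Your third step is, in essence, the paper's actual proof: one computes $\chi^*\colon H^{15}(K(\Z,3))\to H^{15}(BPU_2)$ via mod~$2$ cohomology and the Wu/Cartan formulas to show $x_1^5=y_{2,(0,1)}$ (and is nonzero) in $H^{15}(BPU_2)$, deduces that $x_1^5+y_{2,(0,1)}$ must be killed by ${}^U\!d_9^{6,8}(c_2^2x_1^2)$ in the spectral sequence for $BPU_2$ (after checking, via Theorem \ref{thm:d^0} and degree reasons, that no other differential can reach ${}^U\!E^{15,0}$), and then transfers this to $BPU_n$ by naturality under the block-diagonal map. Note, however, that for this transfer the relevant check is not that $\Delta'$ is the identity on the base, but that $\Delta^*\otimes\mathrm{id}$ is nonzero on ${}^U\!E_9^{6,8}$; this is where $\Delta^*(c_2)=\tfrac{n}{2}c_2+\tfrac{n(n-2)}{8}c_1^2$ and the hypothesis $n\equiv 2\bmod 4$ (so that $\tfrac{n}{2}$ is odd) enter. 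Carried out carefully, this step alone determines both coefficients simultaneously, so your first two steps are not needed.

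Those first two steps, as written, do not stand up. Step one is circular: the statement that ${}_2H^{14}(BPU_n)\cong\Z/2$ for $n\equiv 2\bmod 4$ is precisely the degree-$14$ part of Theorem \ref{thm: tor subgroup}, whose proof in the paper depends on this very lemma (via the corollary following it); you cannot invoke it to conclude ${}^U\!E_\infty^{6,8}=0$. Step two cannot be completed with the tools you cite: the differential formulas of Proposition \ref{prop:diff0} (transported to ${}^T\!E$ by Proposition \ref{pro:diff1}) only ever produce the classes $x_1$ and $y_{p,k}$, and no product of these lands in degree $15$ other than $x_1^5$ --- which, as you yourself observe, dies in ${}^T\!E_4^{15,0}$. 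So no amount of Leibniz-rule bookkeeping with those formulas can output a $y_{2,(0,1)}$-term; the ``main obstacle'' you flag is in fact an unbridged gap, and you would also need to verify separately that $y_{2,(0,1)}$ survives to ${}^T\!E_9^{15,0}$ for the comparison map to detect anything. The remedy is simply to discard steps one and two and let the $BPU_2$ computation plus naturality (your step three, which is the paper's route) carry the whole argument.
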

\begin{proof}
The map $\chi: BPU_2\to K(\Z,3)$ in the fiber sequence $U$ induces a homomorphism
$$\chi^*:H^*(K(\Z,3);\Z/2)\to H^*(BPU_2;\Z/2).$$
It is well known that $H^*(BPU_2;\Z/2)\cong \Z/2[w_2,w_3]$, where $w_2,\ w_3$ are the universal Stiefel-Whitney classes. Moreover, it is also known that (see, for example \cite[Proposition 4.3]{fan2024cohomology})
$$H^*(K(\Z,3);\Z/2)\cong \bigotimes_{k\ge 0} \Z/2[x_{2,k};2^{k+2}+1]\otimes \Z/2[x_1;3].$$
Here $\Z/2[x;k]$ denotes the polynomial algebra over $\Z/2$ with one generator $x$ of degree $k$. We also abuse the notation $x_1$ here to denote its mod 2 reduction.

By \cite[Proposition 6.2]{fan2024cohomology}, we have $\chi^*(x_1)=w_3$, $\chi^*(x_{2,0})=w_2w_3$, and 
\[
    \begin{split}           
        \chi^*(x_{2,1})=&\ \chi^*Sq^4(x_{2,0})
        =Sq^4\chi^*(x_{2,0})=Sq^4(w_2w_3)\\
        =&\ Sq^1(w_2)Sq^3(w_3)+Sq^2(w_2)Sq^2(w_3)\\
        =&\ w_3^3+w_2^3w_3.
        \end{split}
\]
By \cite[Proposition 2.14]{gu2019cohomology}, the mod $2$ reduction $\rho:H^*(K(\Z,3))\to $ 
$H^*(K(\Z,3);\Z/2)$ maps $y_{2,(0,1)}$ to $x_1^2x_{2,1}+x_{2,0}^3$ . Therefore, $$\chi^*\rho(y_{2,(0,1)})=w_3^2(w_3^3+w_2^3w_3)+w_2^3w_3^3=w_3^5=\chi^*(x_1^5).$$ 
In other words, $x_1^5=y_{2,(0,1)}\in H^{15}(BPU_2)$. 

 By Theorem \ref{thm:d^0}, the differential ${^U\!d}_{15}^{0,14}$ is trivial. So due to the degree reason, we must have ${^U\!d}_{9}^{6,8}(c_2^2y_{2,0})=x_1^5+y_{2,(0,1)}$ in the Serre spectral sequence of $BPU_2$. 

Now we consider the following commutative diagram induced by the diagonal map $\Delta:BU_2\to BU_n$, 
$$\xymatrix{
{^U\!E}_{9}^{6,8}(n)\ar@{->}[d]^{\Delta^*\otimes id} \ar@{->}[r]^{{^U\!d}_{9}^{6,8}}
& {^U\!E}_{9}^{15,0}(n)\ar@{=}[d]\ar@{=}[r] & {^U\!E}_2^{15,0}(n)\ar@{=}[d]\\
{^U\!E}_{9}^{6,8}(2)\ar@{->}[r]^{{^U\!d}_9^{6,8}}
& {^U\!E}_{9}^{15,0}(2)\ar@{=}[r] & {^U\!E}_2^{15,0}(2)\\
}$$
By \cite[(7.4)]{gu2019cohomology}, $\Delta^*(c_1)=\frac{n}{2}c_1$ and $\Delta^*(c_2)=\frac{n}{2}c_2+\frac{n(n-2)}{8}c_1^2$. Thus the left vertical map is an isomorphism, then the lemma follows. 
\end{proof}

Combining Lemma \ref{lem:d_9{6,8}} with \eqref{eq:E{10,4}}, we have 
\begin{cor}
	If $n\equiv2$ mod $4$, then $_2H^{14}(BPU_n)=\Z/2\{e_2y_{2,1}\}$.
\end{cor}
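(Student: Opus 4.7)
The plan is to combine the preceding computations (Lemma \ref{lem: E inf}, equation \eqref{eq:E{10,4}}, and Lemma \ref{lem:d_9{6,8}}) with a short extension argument, and then identify the resulting generator with $e_2 y_{2,1}$. There is no genuinely hard step here; all the substantive work has already been done in establishing Lemma \ref{lem:d_9{6,8}}.

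First I would catalogue the four potentially nontrivial entries of total degree $14$ on the $^U\!E_\infty$ page, namely ${^U\!E}^{0,14}_\infty$, ${^U\!E}^{6,8}_\infty$, ${^U\!E}^{10,4}_\infty$, and ${^U\!E}^{12,2}_\infty$, and invoke Lemma \ref{lem: E inf} to kill ${^U\!E}^{12,2}_\infty$. Next I would use Lemma \ref{lem:d_9{6,8}}: since ${^U\!E}_9^{6,8}=\Z/2\{c_2^2 x_1^2\}$ is cyclic of order two and ${^U\!d}_9^{6,8}(c_2^2 x_1^2)=x_1^5+y_{2,(0,1)}\neq 0$, this differential is injective, so ${^U\!E}^{6,8}_\infty=0$. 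Combined with \eqref{eq:E{10,4}}, which in the case $n\equiv 2\pmod 4$ gives ${^U\!E}^{10,4}_\infty=\Z/2\{c_1^2 y_{2,1}\}$, the only remaining nontrivial entries on the $\infty$-page in total degree $14$ are ${^U\!E}^{0,14}_\infty$ and ${^U\!E}^{10,4}_\infty$.

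The second step is a trivial extension argument. The surviving entry ${^U\!E}^{0,14}_\infty\subseteq {^U\!E}_2^{0,14}=H^{14}(BU_n)$ is a subgroup of a free $\Z_{(2)}$-module, hence torsion-free. Consequently the entire $2$-primary torsion of $H^{14}(BPU_n)_{(2)}$ sits in the filtration quotient ${^U\!E}^{10,4}_\infty$, yielding an isomorphism $_2H^{14}(BPU_n)\cong\Z/2\{c_1^2 y_{2,1}\}$.

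Finally I would identify this generator with $e_2 y_{2,1}$ by the same device used earlier in the excerpt: since $y_{2,1}$ is $2$-torsion, the relation $c_1^2 y_{2,1}=e_2 y_{2,1}$ holds (following from the definition of $e_2$ in \eqref{equ: e2e3}, which differs from $c_1^2$ by a multiple of $2$). This gives $_2H^{14}(BPU_n)=\Z/2\{e_2 y_{2,1}\}$, as claimed. The only place one might stumble is verifying that no nonzero differential hitting ${^U\!E}^{10,4}$ from lower columns has been overlooked in arriving at \eqref{eq:E{10,4}}, but this was already dispatched in the earlier analysis yielding that equation, so nothing new is required.
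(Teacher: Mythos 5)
Your proposal is correct and follows the same route as the paper, whose proof of this corollary is simply the observation that it follows by combining Lemma \ref{lem:d_9{6,8}} with \eqref{eq:E{10,4}}; you have merely spelled out the bookkeeping (vanishing of ${^U\!E}^{12,2}_\infty$ and ${^U\!E}^{6,8}_\infty$, the torsion-free quotient ${^U\!E}^{0,14}_\infty$, and the identification $c_1^2y_{2,1}=e_2y_{2,1}$) that the paper leaves implicit from the surrounding discussion.
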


\section{Proof of Theorem \ref{thm:cyc} and \ref{thm: ring structure}}
\label{sec: proof of thm 2}

First we construct the elements $e_i\in K_n^{2i}$ as mentioned in the discussion preceding Theorem \ref{thm:cyc}. We implicitly assume that $e_i=0$ for $i>n$. 

The construction of $e_i$ is an inductive procedure, choosing $e_2$ as a generator of the infinite cyclic group $K_n^2\cong\mathbb{Z}$. Suppose that $e_i\in K_n^{2i}$, $i\leq m$, are chosen such that $\mathbb{Z}[e_2,\dots,e_m]\to K_n$ is an embedding and the graded quotient group $Q_{n,m}:=K_n/\mathbb{Z}[e_2,\dots,e_m]$ satisfies that $Q_{n,m}^{2i}$ is a torsion group for $i\leq m$. Then by \eqref{equ:torsion-free}, we have $Q_{n,m}^{2(m+1)}\cong\mathbb{Z}\oplus A$ for some torsion group $A$, and we choose $e_{m+1}$ to be a generator of the summand $\mathbb{Z}$. By the construction procedure, each $e_i$ is a polynomial in $c_1,\dots,c_i$ with relatively prime coefficients and with a nonzero multiple of $c_i$ as a term.

The explicit formulas of $e_2,e_3$ are given in \cite{gu2019cohomology}. 
\begin{equation}
\label{equ: e2e3}
    \begin{split}
        e_2 = & \ \gcd(2,n-1)^{-1}[2nc_2-(n-1)c_1^2],\ \text{ and}\\
        e_3 = & \ g_3^{-1}[3n^2c_3-3n(n-2)c_2c_1+(n-1)(n-2)c_1^3],
    \end{split}
\end{equation}
where $g_3=\gcd(3,n-1) \gcd(3,n-2)
\gcd(4,n-2)$.

\begin{lem}\label{lem:H8}
$K_n^8$ is spanned by the following two elements:
\begin{equation*}
    \begin{split}
        e_2^2 = & \ \gcd(2,n-1)^{-2}[4n^2c_2^2-4n(n-1)c_2c_1^2+(n-1)^2c_1^4],\\
        e_4= & \ \gcd(3,n)^{-1}[nc_4-(n-3)c_3c_1
        -\frac{1}{2}(n^2+n+1)(n-2)(n-3)c_2^2\\
        & +\frac{1}{2}n^2(n-2)(n-3)
        c_2c_1^2-\frac{1}{8}n(n-1)(n-2)(n-3)c_1^4].
    \end{split}
\end{equation*}
\end{lem}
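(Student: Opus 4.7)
The plan is to apply Theorem \ref{thm:d^0} to identify $K_n^8$ with $\opn{Ker}({^U\!d}_3^{0,8})$ and then compute this kernel explicitly as a $\Z$-lattice, treating $n \geq 4$ as the main case (the small cases $n \leq 3$ follow the same argument with the columns corresponding to vanishing $c_i$ deleted, and with $e_4 = 0$).

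First, using Corollary \ref{cor:d3} together with the Leibniz rule, I would write down the matrix of ${^U\!d}_3^{0,8}$ in the bases $\{c_4, c_3 c_1, c_2^2, c_2 c_1^2, c_1^4\}$ of ${^U\!E}_3^{0,8}$ and $\{c_3 x_1, c_1 c_2 x_1, c_1^3 x_1\}$ of ${^U\!E}_3^{3,6}$. A direct computation yields
\[
M = \begin{pmatrix}
n-3 & n & 0 & 0 & 0 \\
0 & n-2 & 2(n-1) & 2n & 0 \\
0 & 0 & 0 & n-1 & 4n
\end{pmatrix}.
\]
By \eqref{equ:torsion-free}, the rational rank of $\opn{Ker} M$ equals the number of degree-$8$ monomials in $e_2,\dots,e_n$, which for $n \geq 4$ is $2$ (from $e_2^2$ and $e_4$); hence $\opn{Ker} M$ is free of rank $2$.

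Next, I would verify directly that the claimed $e_4$ lies in $\opn{Ker} M$. The essential identity is $(n-1)(n^2+n+1) = n^3 - 1$, which makes the middle row vanish, while the other two rows vanish by routine algebra. I would then check that each coefficient of $e_4$ is integral after dividing by $\gcd(3,n)$: this reduces to the divisibility of $(n-2)(n-3)$ by $2$ (consecutive integers) and of $n(n-1)(n-2)(n-3)$ by $24$ (among four consecutive integers one is divisible by $4$ and one by $3$), with the additional factor of $3$ when $3 \mid n$ supplied by $n$ and $n-3$.

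Finally, to show that $\{e_2^2, e_4\}$ is a $\Z$-basis of $\opn{Ker} M$, I would exploit the block-triangular shape of $M$. Let $\pi : \Z^5 \to \Z^2$ be the projection onto the $(c_4, c_3 c_1)$-coordinates, yielding a short exact sequence
\[
0 \to \opn{Ker}\pi \cap \opn{Ker} M \to \opn{Ker} M \to \pi(\opn{Ker} M) \to 0.
\]
The first row of $M$ forces the $c_4$-coefficient of any kernel element to be a multiple of $n/\gcd(3,n)$, so $\pi(\opn{Ker} M)$ is the rank-one sublattice generated by $(n/\gcd(3,n),\, -(n-3)/\gcd(3,n)) = \pi(e_4)$. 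The remaining piece $\opn{Ker}\pi \cap \opn{Ker} M$ is spanned by $\Z$-combinations of $c_2^2, c_2 c_1^2, c_1^4$ annihilated by the last two rows of $M$, and I would show $e_2^2$ generates it by verifying primitivity of its coefficient tuple $(4n^2,\, -4n(n-1),\, (n-1)^2)/\gcd(2,n-1)^2$, splitting on the parity of $n$. The main obstacle is the divisibility bookkeeping across the cases arising from the parity of $n$ and the two possible values of $\gcd(3,n)$, but these reduce to elementary number-theoretic checks once $M$ is in hand.
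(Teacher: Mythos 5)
Your proposal is correct and follows essentially the same route as the paper's own proof: identify $K_n^8$ with $\operatorname{Ker}({^U\!d}_3^{0,8})$ via Theorem \ref{thm:d^0}, extract the relation $(n-3)a_1+na_2=0$ to see that $n/\gcd(3,n)$ divides the $c_4$-coefficient and that $e_4$ realizes this minimal value, and then observe that the complementary piece (where $a_1=a_2=0$) is the rank-one lattice generated by the primitive vector $e_2^2$. The paper compresses the lattice bookkeeping you spell out (the explicit matrix, the split short exact sequence, the identity $(n-1)(n^2+n+1)=n^3-1$, and the integrality checks) into the phrase ``it is immediate to verify,'' but the logical content is the same.
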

\begin{proof}
Recall that $K_n^8\cong \opn{Ker}{^U\!d}_3^{0,8}$.
Thus, we only need to solve the equation
\[{^U\!d}_3^{0,8}(a_1c_4+a_2c_3c_1+a_3c_2^2+a_4c_2c_1^2+a_5c_1^4)=0,\ 
a_i\in\Z.\]
By Corollary \ref{cor:d3}, the above equation gives $(n-3)a_1 +na_2=0$, which implies that $\frac{n}{\gcd(3,n)}$ divides $a_1$. It is immediate to verify that ${^U\!d}_3^{0,8}(e_4)=0$ and $a_1=\frac{n}{\gcd(3,n)}$ for $e_4$. Therefore we have $\opn{Ker}{^U\!d}_3^{0,8}=\Z\{e_2^2, e_4\}$. Note that $a_1=0$ if and only if $a_2=0$, and
the coefficients of $e_2^2$ are coprime so that
$\opn{Ker}{^U\!d}_3^{0,8}\cap \Z[c_3,c_2,c_1]=\Z\{e_2^2\}$.
\end{proof}

Since the coefficients of $e_2e_3$ are coprime, it is a generator of the group $K_n^{10}\cong\mathbb{Z}^2$. Therefore, we have
\begin{lem}\label{lem:H10}
The elements $e_2e_3$, $e_5$ span $K_n^{10}$.
\end{lem}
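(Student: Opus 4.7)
The plan is to mirror the strategy of Lemma \ref{lem:H8}, exploiting the additional information that $e_2 e_3$ automatically lies in $K_n^{10}$. By Theorem \ref{thm:d^0}, $K_n^{10}\cong \opn{Ker}({^U\!d}_3^{0,10})$, a subgroup of the free abelian group $H^{10}(BU_n)$ with basis $\{c_5, c_4 c_1, c_3 c_2, c_3 c_1^2, c_2^2 c_1, c_2 c_1^3, c_1^5\}$; in particular $K_n^{10}$ is torsion-free. Using (\ref{equ:torsion-free}), $K_n\otimes\mathbb{Q}\cong \mathbb{Q}[c_2,\ldots,c_n]$, whose degree-$10$ part is two-dimensional (spanned over $\mathbb{Q}$ by $c_2 c_3$ and $c_5$). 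Hence $K_n^{10}$ is a free abelian group of rank $2$.

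The next step is to show that $e_2 e_3$ is a \emph{primitive} element of this group, i.e.\ its coefficients in the above basis are coprime integers. Expanding the product using (\ref{equ: e2e3}) gives
\[
    \gcd(2,n-1)\cdot g_3\cdot e_2 e_3 = 6n^3 c_2 c_3 - 3n^2(n-1)c_3 c_1^2 - 6n^2(n-2)c_2^2 c_1 + 5n(n-1)(n-2)c_2 c_1^3 - (n-1)^2(n-2)c_1^5,
\]
and a prime-by-prime check shows that after dividing by $D:=\gcd(2,n-1)\cdot g_3$ the resulting integer vector has greatest common divisor $1$. Since $e_2 e_3$ is primitive in the ambient free group $H^{10}(BU_n)$, it remains primitive in the subgroup $K_n^{10}$.

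A primitive element of a finitely generated free abelian group extends to a $\mathbb{Z}$-basis, so $K_n^{10}/\Z\{e_2 e_3\}$ is infinite cyclic. Following the inductive construction of the $e_i$ described before Lemma \ref{lem:H8}, we take $e_5$ to be any element of $K_n^{10}$ whose class generates this quotient; by construction $e_5$ has nonzero coefficient in $c_5$. Then $\{e_2 e_3, e_5\}$ is a $\mathbb{Z}$-basis of $K_n^{10}$, completing the proof.

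The main obstacle is the coprimality verification. One first checks that $D$ really does divide every coefficient on the right side, which requires a short case analysis in $n \bmod 12$ to handle the factors in $g_3 = \gcd(3,n-1)\gcd(3,n-2)\gcd(4,n-2)$ and in $\gcd(2,n-1)$. Then for each prime $p$ one exhibits a coefficient not divisible by $p$: the primes $2,3,5$ are handled by direct inspection of numerical factors, while for a prime $p\mid n$ the $c_1^5$-coefficient is coprime to $p$, and for $p\mid n-1$ or $p\mid n-2$ the $c_2 c_3$-coefficient is coprime to $p$. All these checks are of the same elementary character as those used to construct $e_3$ and $e_4$.
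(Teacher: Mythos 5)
Your proof is correct and follows essentially the same route as the paper, which simply observes that $K_n^{10}\cong\Z^2$ and that the coefficients of $e_2e_3$ are coprime, so that $e_2e_3$ is part of a basis completed by $e_5$. The only remark worth adding is that your prime-by-prime coprimality check can be replaced by Gauss's lemma: $e_2$ and $e_3$ are each primitive polynomials, so their product is automatically primitive.
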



Next we show that the quotient group in Theorem \ref{thm:cyc} is a cyclic group. 
Let $K_n'=K_n^{12}\cap \Z[c_1,c_2,c_3,c_4]$ and
$L_n=K_n'/\Z\{e_2^3,e_3^2,e_2e_4\}$.

\begin{prop}\label{prop:cyclic}
		The $p$-primary subgroup $_pL_n$ of $L_n$ is a cyclic group for any prime $p$. Moreover, $_pL_n=0$ if $p\nmid n$ or $p=2$, $n\equiv2$ mod $4$. 
	\end{prop}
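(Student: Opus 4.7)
My plan is to give an explicit linear-algebraic description of $L_n$ and read off its $p$-primary structure directly. By Theorem~\ref{thm:d^0}, $K_n^{12} = \opn{Ker}({^U\!d}_3^{0,12})$, so $K_n'$ is the kernel of the restriction of $^U\!d_3^{0,12}$ to the degree-$12$ part of $\mathbb{Z}[c_1,c_2,c_3,c_4]$. Corollary~\ref{cor:d3} together with the Leibniz rule gives this restricted differential as an explicit $9 \times 6$ integer matrix, namely the submatrix of $M_2$ in the proof of Lemma~\ref{lem: E inf} obtained by deleting the rows indexed by $c_5 c_1, c_6$ and the column indexed by $c_5 x_1$. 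Since $K_n^{12} \otimes \mathbb{Q}$ is spanned by $e_2^3, e_3^2, e_2 e_4, e_6$ and $e_6$ necessarily involves $c_6$, we have $K_n' \otimes \mathbb{Q} = \mathbb{Q}\{e_2^3, e_3^2, e_2 e_4\}$, so $K_n'$ is $\mathbb{Z}$-free of rank $3$. My first step is to produce an explicit $\mathbb{Z}$-basis $\{b_1, b_2, b_3\}$ of $K_n'$ by a Smith-normal-form reduction of this $9 \times 6$ matrix.

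Next, using the explicit formulas~\eqref{equ: e2e3} and Lemma~\ref{lem:H8} for $e_2, e_3, e_4$, I will express each of $e_2^3, e_3^2, e_2 e_4$ as an integer combination of $b_1, b_2, b_3$, assembling a $3 \times 3$ integer matrix $A = A(n)$. Then $L_n \cong \opn{coker} A(n)$, and the cyclicity of $_pL_n$ is equivalent to the condition that $A(n) \bmod p$ has rank at least $2$, or equivalently that the $\gcd$ of the $2 \times 2$ minors of $A(n)$ is coprime to $p$. My plan for the cyclicity part of the proposition is therefore to exhibit two specific $2 \times 2$ minors of $A(n)$ whose $\gcd$ is $1$ (or at worst a harmless constant), handling all primes $p$ uniformly.

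For the vanishing assertions, when $p \nmid n$ the isomorphism~\eqref{equ:p subgroup p not divide n} gives $K_n \otimes \mathbb{Z}_{(p)} \cong \mathbb{Z}_{(p)}[e_2,\ldots,e_n]$, and since $e_6$ is the only degree-$12$ monomial in $e_2,\ldots,e_n$ which contains $c_6$, we obtain $K_n' \otimes \mathbb{Z}_{(p)} = \mathbb{Z}_{(p)}\{e_2^3, e_3^2, e_2 e_4\}$, hence $L_n \otimes \mathbb{Z}_{(p)} = 0$. When $p = 2$ and $n \equiv 2 \bmod 4$, the $\gcd$'s appearing in the formulas~\eqref{equ: e2e3} and Lemma~\ref{lem:H8} are all odd, so $e_2, e_3, e_4$ behave well $2$-locally, and the $2$-local version of the kernel computation (closely paralleling case~(a) in the proof of Lemma~\ref{lem: E inf}) shows that $\{e_2^3, e_3^2, e_2 e_4\}$ already generates $K_n' \otimes \mathbb{Z}_{(2)}$. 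The principal obstacle is the minor-identification step in the second paragraph: carrying out the Smith-normal-form reduction and controlling the resulting coefficients will require careful combinatorial bookkeeping, and I expect the full determinant $\det A(n)$ to factor as a constant multiple of $\lambda_n^3$, consistent with Theorem~\ref{thm:cyc}.
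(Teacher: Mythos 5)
Your linear-algebraic framework is correct and is, at bottom, the same criterion the paper uses: $K_n'$ is free of rank $3$ (for $n\geq 4$), $L_n\cong\opn{coker}A(n)$ for the $3\times 3$ integer matrix expressing $e_2^3,e_3^2,e_2e_4$ in a basis of $K_n'$, and $_pL_n$ is cyclic precisely when $A(n)\bmod p$ has rank at least $2$. Your treatment of $p\nmid n$ via \eqref{equ:p subgroup p not divide n} is exactly the paper's. The problem is that the decisive step of your cyclicity argument is only announced, never performed: you produce neither the basis $\{b_1,b_2,b_3\}$, nor the matrix $A(n)$, nor the two $2\times2$ minors with unit gcd. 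This is not bookkeeping that can be waved through. The Smith-normal-form reduction of the $9\times6$ matrix has to be carried out uniformly in $n$, and the resulting basis of $K_n'$ depends on $n$ through a web of gcd and congruence conditions --- compare the denominators $\gcd(2,n)^2$, $9(n-1)$, $2(n-2)$, $3(n-1)$ appearing in the generators $\beta_6$ and $\alpha_6$ of Lemmas \ref{lem: Kn' cap c123} and \ref{lem: Kn'}, which are essentially two of your $b_i$. Exhibiting minors whose gcd is a unit for every $n$ is precisely where all the content of the proposition lies, and the same remark applies to the unexecuted ``$2$-local kernel computation'' you invoke for $n\equiv2\bmod 4$.

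The paper verifies the rank condition without ever choosing a basis of $K_n'$: since $\dim_{\Z/p}(K_n'\otimes\Z/p)=3$, having rank at least $2$ is equivalent to the space of $\Z/p$-linear relations among the reductions of $e_2^3$, $e_3^2$, $e_2e_4$ being at most one-dimensional, and this is read off directly from \eqref{equ: e2e3} and Lemma \ref{lem:H8}: for $p\mid n$ one has $e_2\equiv xc_1^2$, $e_3\equiv yc_1^3$ (or $c_3+c_1^3$ when $p=2$, $n\equiv 2\bmod 4$), and $e_4\equiv ac_4+bc_3c_1+\cdots$ with $(a,b)\neq(0,0)$ mod $p$, so the only possible relation is $x^3e_3^2-y^2e_2^3\equiv0$ (and none at all in the exceptional case, which gives $_2L_n=0$ there). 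Cyclicity is then extracted by a short contradiction argument with the structure theorem for finite abelian groups. If you want to salvage your write-up with minimal change, replace the unproduced minors by this relation-space computation; as it stands, the proposal is a plan rather than a proof.
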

\begin{proof}
		If $p\nmid n$, by \eqref{equ:p subgroup p not divide n}, we have
            the ring isomorphism
            \begin{equation*}
    _pH^*(BPU_n)\cong
    {_pH}^*(BSU_n)=
    \Z_{(p)} [c_2, c_3, \dots, c_n ].
\end{equation*}
By \eqref{equ: e2e3} and Lemma \ref{lem:H8}, and restricting this isomorphism to degree $12$, we obtain
\[K_n^{\prime}\otimes\Z_{(p)}= \Z_{(p)}\{e_4e_2,e_3^2,e_2^3\},\]
and	then the statement for $p\nmid n$ follows. 

If $p$ is a prime divisor of $n$, straightforward computation shows that $e_2\equiv xc_1^2\mod p$ for some $x\not\equiv 0$ mod $p$ and
\[e_3\equiv\begin{cases}
yc_1^3 \text{ for some } y\not\equiv 0, &\text{ if } p\neq 2,\text{ or }p=2\text{ and }n=4m,\\
c_3+c_1^3,&\text{ if } p=2 \text{ and } n=4m+2,
\end{cases}\mod p.\]
Moreover, $e_4\equiv ac_4+bc_3c_1+\text{other terms}$ mod $p$, where $a$ and $b$ can not be both zero. 

Let $\rho:K_n^{\prime}\to K_n^{\prime}\otimes\Z/p$ be the mod $p$ reduction map. Then $\rho(e_2^3)$, $\rho(e_3^2)$, $\rho(e_2e_4)$ are linearly independent over $\Z/p$ if $p=2$ and $n=4k+2$. This implies that $_pL_n=0$ in this case.
		
For the remaining cases of $p$ and $n$, there is only one relation, up to scalar multiplication, between $\rho(e_2^3)$, $\rho(e_3^2)$, $\rho(e_4e_2)$. That is 
$x^3\rho(e_3^2)-y^2\rho(e_2^3)=0$. Suppose on the contrary that
\[_pL_n=\Z/p^{k_1}\oplus\cdots\oplus\Z/p^{k_s},\ \ k_1\geq\cdots\geq k_s\geq 1,\ s>1.\]
Let $b_i\in K_n^{\prime}$ be an element such that its image in $L_n$ generates  the $\Z_{p^{k_i}}$ summand. Then in $K_n^{\prime}$, $p^{k_i}b_i=u_ie_2^3+v_ie_3^2+w_ie_4e_2$ for some $u_i,v_i,w_i\in\Z$, where one of
$u_i,v_i, w_i$ is not divided by $p$, so that
$\rho(u_ie_2^3+v_ie_3^2+w_ie_4e_2)=0$. 

The above analysis shows that $p\mid w_i$ and 
$u_ie_2^3+v_ie_3^2\equiv r_i(x^3e_3^2-y^2e_2^3)$ mod $p$ for some  $r_i\in\Z$ with $p\nmid r_i$.
Let 
\[f=r_2(u_1e_2^3+v_1e_3^2+w_1e_4e_2)-r_1(u_2e_2^3+v_2e_3^2+w_2e_4e_2)\in \Z\{e_2^3,e_3^2,e_4e_2\}.\] 
Then $f\equiv 0 \mod p$, which means that $\frac{1}{p}f\in\Z\{e_2^3,e_3^2,e_4e_2\}$. This implies that 
$$p^{k_1-1}r_2b_1-p^{k_2-1}r_1b_2\in \Z\{e_2^3,e_3^2,e_4e_2\}\subset K_n^{\prime},$$ 
i.e. $p^{k_1-1}r_2b_1-p^{k_2-1}r_1b_2=0$ in $L_n$. This is a contradiciton because $p\nmid r_1, r_2$. 
\end{proof}

Let $\mathcal{P}=\{p:\text{ $p$ is a prime divisor of $n$}\}$ and let $\mathcal{S}$ be the set of integers relatively
prime to the elements of $\mathcal{P}$. The set $\mathbf{r}$ of rational numbers $m/s, s\in\mathcal{S}$, is a subring of $\mathbb{Q}$. Now we let $\kk=\mathbf{r}\otimes \Z[\frac{1}{2}]$ if $n\equiv2$ mod $4$, and $\kk=\mathbf{r}$ otherwise.  

By Proposition \ref{prop:cyclic}, $L_n$ is a cyclic group. If $n\equiv0$ mod $4$, its order is coprime with any prime $p$ such that $p\nmid n$. If $n\equiv 2$ mod $4$, its order is coprime with any prime $p$ such that $p\nmid n$ or $p=2$. Thus, we have an isomorphism
$L_n\cong L_n\otimes \kk$.

So, in order to determine the order in Theorem \ref{thm:cyc}, it suffices to compute the order under the coefficient $\kk$.

\begin{lem}
    \label{lem: Kn' cap c123}
    $(K_n^{12}\otimes\kk) \cap \kk[c_1,c_2,c_3]=\kk\{\beta_6,e_2^3\}$, where
    \begin{equation*}\label{eq: beta6}
        \begin{split}
            \beta_6=& \frac{\gcd(3,n)}{\gcd(2,n)^2}[n^2c_3^2
            -2n(n-2)c_3c_2c_1+\frac{8n(n-2)^2}{9(n-1)}c_2^3\\
            & +\frac{2(n-1)(n-2)}{3}c_3c_1^3-\frac{(n-2)^2}{3}c_2^2c_1^2].
        \end{split}
    \end{equation*}
\end{lem}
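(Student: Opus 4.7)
By Theorem~\ref{thm:d^0} and Corollary~\ref{cor:d3}, an element $f\in\kk[c_1,\dots,c_n]$ of degree $12$ belongs to $K_n\otimes\kk$ if and only if $D(f)=0$, where $D$ is the $\kk$-linear derivation determined by $D(c_k)=(n-k+1)c_{k-1}$ (we use here that $x_1$ generates a torsion-free summand of $H^3(K(\Z,3))$, so multiplication by $x_1$ is injective on $H^*(BU_n)$ and the factor of $x_1$ may be dropped). The plan is therefore to compute $\ker D$ on the free $\kk$-module $\kk[c_1,c_2,c_3]^{12}$, whose seven-element monomial basis is $c_3^2,\,c_3c_2c_1,\,c_3c_1^3,\,c_2^3,\,c_2^2c_1^2,\,c_2c_1^4,\,c_1^6$, and to identify the kernel as $\kk\{\beta_6,e_2^3\}$. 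Writing $f=\sum_{i=1}^{7}a_im_i$, the five linear equations produced by $D(f)=0$ solve over $\mathbb{Q}$ to give a two-dimensional kernel with free parameters $(a_1,a_4)=([c_3^2]f,[c_2^3]f)$; the remaining $a_i$ are $\mathbb{Q}$-linear expressions in $a_1,a_4$ with powers of $n$ in the denominator.

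For the inclusion $\kk\{e_2^3,\beta_6\}\subseteq \ker D\cap\kk[c_1,c_2,c_3]$, the case of $e_2^3$ is immediate from $e_2\in K_n\cap\Z[c_1,c_2]$. For $\beta_6$ the vanishing $D(\beta_6)=0$ is a direct substitution, and in fact $\beta_6$ is a normalized representative of the one-parameter family of kernel elements whose $c_2c_1^4$ and $c_1^6$ coefficients both vanish (a short calculation shows the two vanishing conditions coincide via the kernel relations, reducing the defining conditions to a single linear equation on $(a_1,a_4)$). That the coefficients of $\beta_6$ themselves lie in $\kk$ reduces to three observations: $(n-1)\in\kk^{\times}$ since $\gcd(n-1,n)=1$; $3/\gcd(3,n)\in\kk$ (it equals $1$ when $3\mid n$, and $3\in\kk^{\times}$ otherwise); and $\gcd(2,n)^2/2\in\kk$ by separate inspection of $n$ odd, $n\equiv 2\pmod 4$ (where $\kk$ contains $\tfrac12$), and $n\equiv 0\pmod 4$ (where it is the integer $2$).

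For the reverse inclusion, given $f$ in the intersection I decompose $f=\mu\beta_6+\lambda e_2^3$ over $\mathbb{Q}$ and must show $\mu,\lambda\in\kk$. Matching the coefficient of $c_3c_1^3$ on both sides yields
\[
\mu\;=\;\frac{3\gcd(2,n)^2}{2(n-1)(n-2)\gcd(3,n)}\cdot[c_3c_1^3]f,
\]
and a $p$-adic check shows that the prefactor has $v_p=0$ at every prime $p$ relevant to $\kk$ (odd $p\mid n$, $p=3$ with $3\mid n$, and $p=2$ with $n\equiv0\pmod 4$), hence is a unit in $\kk$; thus $\mu\in\kk$. Since the three $c_3$-coefficients of any kernel element are each proportional to $a_1$ via $a_2=-\tfrac{2(n-2)}{n}a_1$ and $a_3=\tfrac{2(n-1)(n-2)}{3n^2}a_1$, they vanish simultaneously upon subtracting $\mu\beta_6$, giving $f-\mu\beta_6\in\ker D\cap\kk[c_1,c_2]^{12}$. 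Finally, $[c_1^6]e_2^3=-\bigl((n-1)/\gcd(2,n-1)\bigr)^3$, and $(n-1)/\gcd(2,n-1)$ is coprime to $n$ and hence a unit in $\kk$, so extracting $\lambda$ from $[c_1^6](f-\mu\beta_6)$ forces $\lambda\in\kk$. The main obstacle is the bookkeeping: the case analyses establishing that the various $\gcd$-prefactors actually lie in $\kk$ must be carried out across the regimes $n$ odd, $n\equiv 2$, and $n\equiv 0\pmod 4$ (and secondarily $3\mid n$ versus $3\nmid n$); the strategic choice of $c_3c_1^3$ and $c_1^6$ as the coefficients of $f$ from which to extract $\mu$ and $\lambda$ respectively is what keeps the resulting $\kk$-prefactors tractable.
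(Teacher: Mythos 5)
Your proposal is correct and follows essentially the same route as the paper: both reduce membership in $K_n^{12}\otimes\kk$ to the linear system $\nabla f=0$ via Corollary \ref{cor:d3}, observe that the $c_3$-coefficients $a_2,a_4$ of a kernel element are determined by (and vanish with) $a_1$, and then normalize against $\beta_6$ and $e_2^3$; your extraction of $\mu$ from $[c_3c_1^3]f$ together with the $p$-adic unit check is just a repackaging of the paper's divisibility statement $\gcd(2,n)^{-2}\gcd(3,n)n^2\mid a_1$. No substantive difference or gap.
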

\begin{proof}
Note that $(K_n^{12}\otimes\kk) \cap \kk[c_1,c_2]\cong\kk$ and
the coefficients of $e_2^3$ are coprime. Thus,
$$(K_n^{12}\otimes\kk) \cap \kk[c_1,c_2]=\kk\{e_2^3\}.$$

Now we consider  $(K_n^{12}\otimes\kk) \cap \kk[c_1,c_2,c_3]$,
which is the solution space of 
$${^U\!d}_3^{0,12} (a_1c_3^2+a_2c_3c_2c_1+a_3c_2^3+a_4c_3c_1^3+a_5c_2^2c_1^2
+a_6c_2c_1^4+a_7c_1^6)=0.$$
It is equivalent to the following equations by Corollary \ref{cor:d3},
\begin{equation*}
    \begin{split}
        \begin{cases}
            2(n-2)a_1 +na_2 =0,\\
            (n-1)a_2 +3na_4 =0,\\
            \cdots\\
        \end{cases}
    \end{split}
\end{equation*}  
The first two equations imply $\gcd(2,n)^{-2}\gcd(3,n)n^2\mid a_1$. Moreover, $a_1=0$ if and only if $a_2=a_4=0$. Hence, if there exists a
solution $\beta_6$ such that $a_1=\gcd(2,n)^{-2}\gcd(3,n)n^2$, then we must
have
$$(K_n^{12}\otimes\kk) \cap \kk[c_1,c_2,c_3]=\kk\{\beta_6,e_2^3\}.$$
It is immediate to verify that $\beta_6$ in the lemma is such a solution. 
\end{proof}

\begin{lem}
    \label{lem: Kn'}
    $(K_n^{12}\otimes\kk) \cap \kk[c_1,c_2,c_3,c_4]=\kk\{\alpha_6,\beta_6,e_2^3\}$, where
    \begin{equation*}
        \label{eq: alpha6}
        \begin{split}
            \alpha_6=& \frac{1}{\gcd(2,n-1)}[2nc_4c_2-(n-1)c_4c_1^2-
            \frac{3n(n-3)}{2(n-2)}c_3^2\\
            & +(n-3)c_3c_2c_1-
            \frac{(n-2)(n-3)}{3(n-1)}c_2^3].
        \end{split}
    \end{equation*}
\end{lem}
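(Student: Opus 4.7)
The plan is to follow the strategy of Lemma \ref{lem: Kn' cap c123}. Parametrize a general degree-$12$ element of $\kk[c_1,c_2,c_3,c_4]$ as
\[f = a_0 c_4 c_2 + a_0' c_4 c_1^2 + a_1 c_3^2 + a_2 c_3 c_2 c_1 + a_3 c_2^3 + a_4 c_3 c_1^3 + a_5 c_2^2 c_1^2 + a_6 c_2 c_1^4 + a_7 c_1^6,\]
with $a_i, a_0' \in \kk$, and impose ${^U\!d}_3^{0,12}(f) = 0$. By Corollary \ref{cor:d3} and the Leibniz rule, $^U\! d_3^{0,12}(f)$ is a $\kk$-combination of the monomials $c_4 c_1 x_1$, $c_3 c_2 x_1$, $c_3 c_1^2 x_1$, $c_2^2 c_1 x_1$, $c_2 c_1^3 x_1$, $c_1^5 x_1$, yielding six linear equations on the nine coefficients.

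The decisive new equation, absent from Lemma \ref{lem: Kn' cap c123}, is the coefficient of $c_4 c_1 x_1$. Using
\[d_3(c_4 c_2) = (n-3) c_3 c_2 x_1 + (n-1) c_4 c_1 x_1, \qquad d_3(c_4 c_1^2) = (n-3) c_3 c_1^2 x_1 + 2n c_4 c_1 x_1,\]
it reads $(n-1) a_0 + 2n a_0' = 0$. This forces $a_0$ to be a $\kk$-multiple of $2n/\gcd(2, n-1)$, and whenever $a_0 = 0$ it forces $a_0' = 0$ as well. In that case $f \in \kk[c_1, c_2, c_3]$, so Lemma \ref{lem: Kn' cap c123} identifies $f$ as a $\kk$-combination of $\beta_6$ and $e_2^3$.

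It remains to exhibit a solution realizing $a_0 = 2n/\gcd(2, n-1)$ (hence $a_0' = -(n-1)/\gcd(2, n-1)$); the proposed $\alpha_6$ in the statement is such a solution, as one checks by a direct computation of $^U\! d_3^{0,12}(\alpha_6)$ using Corollary \ref{cor:d3} and verifying that the coefficient of each basis monomial in $\,^U\! E_3^{3,10}$ vanishes identically in $n$. Combining the two cases then gives $(K_n^{12} \otimes \kk) \cap \kk[c_1, c_2, c_3, c_4] = \kk\{\alpha_6, \beta_6, e_2^3\}$.

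The main obstacle is not conceptual but is the bookkeeping task of confirming that, once $a_0$ and $a_0'$ are fixed at their minimal values, the remaining five equations admit a consistent choice of $a_1, \dots, a_7$ in $\kk$. This goes through because $\kk$ inverts every prime coprime to $n$ (and also $2$ when $n \equiv 2 \bmod 4$), so any auxiliary divisibility condition arising in the back-substitution is automatically absorbed by $\kk$, leaving the $c_4 c_1 x_1$-equation as the sole genuine obstruction and pinning down the $a_0$-direction to a single line spanned by $\alpha_6$.
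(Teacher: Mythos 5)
Your proof is correct and takes essentially the same route as the paper's: both reduce to the single linear system ${^U\!d}_3^{0,12}(f)=0$, isolate the coefficient equation $(n-1)a_0+2na_0'=0$ of $c_4c_1x_1$ to show $a_0$ ranges over the $\kk$-multiples of $2n/\gcd(2,n-1)$ with $a_0=0\Rightarrow a_0'=0$, fall back on Lemma \ref{lem: Kn' cap c123} for the $c_4$-free part, and verify that the displayed $\alpha_6$ realizes the minimal value of $a_0$. No gaps.
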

\begin{proof}
$(K_n^{12}\otimes\kk) \cap \kk[c_1,c_2,c_3,c_4]$ is the solution space of
$${^U\!d}_3^{0,12} (a_0c_4c_2+a_0'c_4c_1^2+a_1c_3^2+a_2c_3c_2c_1+a_3c_2^3+a_4c_3c_1^3+a_5c_2^2c_1^2
+a_6c_2c_1^4+a_7c_1^6)=0.$$
By Corollary \ref{cor:d3}, the above equation gives $(n-1)a_0+2na_0'=0$, which implies $2n/\gcd(2,n-1)\mid a_0$. Clearly $a_0=0$ if and only if $a_0'=0$. Hence, if there exists a
solution $\alpha_6$ such that $a_0=2n/\gcd(2,n-1)$, then we must
have
$$(K_n^{12}\otimes\kk) \cap \kk[c_1,c_2,c_3,c_4]=\kk\{\alpha_6,\beta_6,e_2^3\}.$$
It is immediate to verify that $\alpha_6$ in the lemma is such a solution. 
\end{proof}

As a consequence of Lemma \ref{lem: Kn' cap c123} and \ref{lem: Kn'}, we have the following two equations. For some $b_1,b_2,b_3\in\kk$,
\begin{equation}
    \label{eq:e3 2}
        e_3^2=\gcd(3,n)\lambda_n^2\beta_6+b_1e_2^3,
\end{equation}
\begin{equation}
    \label{eq:e2e4}
        \frac{\lambda_n}{n}e_4e_2=\frac{\lambda_n}{\gcd(3,n)}
        \alpha_6+b_2\beta_6+b_3e_2^3,
\end{equation}
where $\lambda_n$ is defined in Theorem \ref{thm:cyc}.

\begin{proof}[Proof of Theorem \ref{thm:cyc}]
It suffices to show that the order of $L_n$ is $\lambda_n^3$.
Recall that $L_n\cong L_n\otimes\kk$. So we compute the order under the coefficient $\kk$. Let $\bar{\alpha}_6,\bar{\beta}_6\in L_n\otimes\kk$ be the image of $\alpha_6,\beta_6$ respectively.
By \eqref{eq:e3 2} and \eqref{eq:e2e4}, $\bar{\alpha}_6\in L_n\otimes\kk$ is $\lambda_n^3$-torsion, and the order of $\bar{\beta}_6\in L_n\otimes\kk$  is $\gcd(3,n)\lambda_n^2$ . Now we claim that the order of $\bar{\alpha}_6$ is $\lambda_n^3$. 

Note that for any prime $p$, if $n\equiv 2$ mod $4$, $p\mid \lambda_n$ is equivalent to $p\mid n$ and $p\neq 2$. If $n\not\equiv 2$ mod $4$, $p\mid  \lambda_n$ is equivalent to $p\mid n$.

We consider the following two cases:

(1) If $3\nmid n$ or $9\mid n$, taking both sides of (\ref{eq:e2e4}) modulo any prime $p\mid \lambda_n$, we obtain
\[\frac{3(1-n)\lambda_n}{\gcd(3,n)\gcd(2,n-1)n}c_3c_1^3+\text{other terms}\equiv b_2\beta_6+b_3c_1^6 \mod p.\]
Since $\frac{3(1-n)\lambda_n}{\gcd(3,n)\gcd(2,n-1)n}\not\equiv0$ mod $p$,
we must have $p\nmid b_2$. In other words, $b_2$ and $\lambda_n$ are coprime. Hence, by 
\eqref{eq:e3 2} and \eqref{eq:e2e4}, the order of $\bar{\alpha}_6$ is $\lambda_n^3$.

(2) If $3\mid n$ but $9\nmid n$, we can similarly conclude that $p\nmid b_2$ for any prime $p\mid \lambda_n$ with $p\neq 3$. 
For prime $p=3$, if $3\nmid b_2$, we have done. Otherwise, by ($\ref{eq:e2e4}$), we obtain
\[\frac{\lambda_n}{n}e_4e_2= \frac{\lambda_n}{3}(\alpha_6-\beta_6)+(b_2+\frac{\lambda_n}{3})\beta_6+b_3e_2^3,\]
then $3\nmid b_2+\frac{\lambda_n}{3}$. Replace $\alpha_6$ by $\alpha_6-\beta_6$, then the order of $\bar{\alpha}_6$ is still $\lambda_n^3$.

The above claim shows that the order of $\bar{\beta}_6$ divides the order of $\bar{\alpha}_6$. 
By Lemma \ref{lem: Kn'}, $(K_n^{12}\otimes\kk) \cap \kk[c_1,c_2,c_3,c_4]$ can be spanned by $\alpha_6,\beta_6,e_2^3$. So we have
$$\lambda_n^3(K_n^{12}\otimes\kk) \cap \kk[c_1,c_2,c_3,c_4]\subset \kk\{e_4e_2,e_3^2,e_2^3\}.$$ 
Therefore
$L_n$ is a cyclic group generated by $\bar{\alpha}_6$ whose order is $\lambda_n^3$.
\end{proof}
  
\begin{rem}\label{rem:b,c,d}
  We can explicitly determine the integers $b,c,d$ through a tedious and routine calculation.
  For example, when $n=5$, let 
  $$\alpha_6=5c_4c_2-2c_4c_1^2-15c_3^2+16c_3c_2c_1+26c_2^3-4c_3c_1^3
  -36c_2^2c_1^2+15c_2c_1^4-2c_1^6.$$
  Then we have $125\alpha_6=25e_4e_2-3e_3^2+119e_2^3.$

  \begin{cor}\label{cor:Kn 12}
  For any $n\geq 2$, $K_n$ in degrees $\leq 12$ is isomorphic to the following graded ring 
     $$\Z[e_2,e_3,e_4,e_5,e_6,\alpha_6]/(\lambda_n^3\alpha_6-be_4e_2-ce_3^2-de_2^3)\ \text{ for some } b,c,d\in\Z,$$ where $e_i=0$ for $i>n$.
\end{cor}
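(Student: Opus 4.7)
The plan is to bootstrap Theorem \ref{thm: ring structure}'s Corollary from Theorem \ref{thm:cyc} together with the explicit description of $K_n$ in degrees $\le 11$. The corollary is an assembly result rather than a fundamentally new computation, so the work splits into three short pieces.

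First, I would identify $K_n^{2i}$ for $i\le 5$ with $\Z[e_2,e_3,e_4,e_5]^{2i}$ (under the convention $e_i = 0$ for $i>n$). Since $K_n$ is torsion-free and concentrated in even degrees, and since $K_n\otimes\Q = \Q[c_2,\dots,c_n]$, the ranks match the required ones. For $i=2,3$ this is done via the coprime-coefficient analysis of $\opn{Ker}{^U\!d}_3^{0,2i}$ exactly as in the proof of Lemma \ref{lem:H8}, while for $i=4,5$ it is precisely Lemmas \ref{lem:H8} and \ref{lem:H10}. In particular, $e_2,e_3,e_4,e_5$ are algebraically independent in this range and no extra relations appear.

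Next, I would treat degree $12$. Theorem \ref{thm:cyc} gives the cyclic quotient $K_n^{12}/(e_2^3,e_3^2,e_4e_2,e_6)\cong\Z/\lambda_n^3$. Choose an integer lift $\alpha_6\in K_n^{12}$ of a generator, so that $\lambda_n^3\alpha_6 = ae_6+be_4e_2+ce_3^2+de_2^3$ for some integers $a,b,c,d$. To force $a=0$, I would select $\alpha_6$ to lie in the subgroup $K_n^{12}\cap\Z[c_1,\dots,c_5]$ of elements with vanishing $c_6$-coefficient, which is possible by taking the representative from Lemma \ref{lem: Kn'} (which already avoids $c_5,c_6$) and clearing denominators to land in $K_n^{12}$, then adjusting by integer multiples of $e_6$ if needed. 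Since $e_2^3,e_3^2,e_4e_2$ also have zero $c_6$-coefficient while $e_6$ has nonzero $c_6$-coefficient, comparing the $c_6$-coefficient of both sides of the relation forces $a=0$, yielding the desired form $\lambda_n^3\alpha_6 = be_4e_2+ce_3^2+de_2^3$.

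Finally, I would define a graded ring homomorphism $\phi$ from $\Z[e_2,\dots,e_6,\alpha_6]/(\lambda_n^3\alpha_6-be_4e_2-ce_3^2-de_2^3)$ to $K_n$ by sending each generator to its namesake. It is well-defined by the relation just established. In degrees $\le 11$, $\phi$ is an isomorphism by the first step. In degree $12$, the single relation reduces five generators to four $\Z$-linearly independent elements (rationally), matching the rank of $K_n^{12}$; surjectivity follows since $\alpha_6$ together with $e_6,e_4e_2,e_3^2,e_2^3$ generate $K_n^{12}$ by Theorem \ref{thm:cyc}, and injectivity then follows by equality of ranks of free abelian groups. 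The main obstacle in this plan is the integrality of $\alpha_6$ and the elimination of the $e_6$ term: the formula in Lemma \ref{lem: Kn'} is rational with denominators depending on $n$, so verifying the existence of an integer representative generating the cyclic quotient requires a careful integer-clearing argument; the explicit $n=5$ case in Remark \ref{rem:b,c,d} serves as a template for how this goes in practice.
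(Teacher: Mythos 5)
Your proposal is correct and follows the same route the paper takes implicitly: Corollary \ref{cor:Kn 12} is stated there without a separate proof, as an assembly of the construction of the $e_i$, Lemmas \ref{lem:H8} and \ref{lem:H10} in degrees $\le 10$, and Theorem \ref{thm:cyc} in degree $12$, with the $e_6$-term absent precisely because the representative of $\alpha_6$ lies in $K_n^{12}\cap\Z[c_1,c_2,c_3,c_4]$, exactly as you argue via the $c_6$-coefficient. Two details are worth making explicit: clearing denominators multiplies $\alpha_6$ only by a unit of $\kk$, hence by an integer coprime to $\lambda_n$, so the result still generates the cyclic quotient; and injectivity of your map $\phi$ in degree $12$ requires the presented ring to be torsion-free there, which holds because a common divisor $g>1$ of $\lambda_n^3,b,c,d$ would give a relation forcing $\bar\alpha_6$ to have order dividing $\lambda_n^3/g$, contradicting the exact order $\lambda_n^3$ in Theorem \ref{thm:cyc}.
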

  
\end{rem}

\begin{proof}[Proof of Theorem \ref{thm: ring structure}]
Recall that $K_n\cong {^U\!E}_\infty^{0,*}$.
Choose elements $\tilde{e}_i\in H^{2i}(BPU_n)$, $2\leq i\leq \min\{6,n\}$, such that their images in $^U\!E_\infty^{0,2i}$ are $e_i$ and the product relations in \cite[Theorem 1.3]{fan2024operators} are satisfied. Also, choose $\tilde\alpha_6\in H^{12}(BPU_n)$ such that its image in $^U\!E_\infty^{0,12}$ is $\alpha_6$. 

Now we prove the additional product relations in Theorem \ref{thm: ring structure}. $\tilde e_2y_{3,0}=0$ is clear, since $\tilde e_2y_{3,0}=0$ is $3$-torsion, but $H^{12}(BPU_n)$ has no $3$-torsion elements. By Theorem \ref{thm: tor subgroup} and its proof, $\tilde e_5x_1\in\Z/\gcd(2,n)\{x_1y_{2,1}\}$. So replacing $\tilde e_5$ by $\tilde e_5+y_{2,1}$ if necessary, we get $\tilde e_5x_1=0$.

It remains to verify that in $H^*(BPU_n)$
$$\alpha:=\lambda_n^3\tilde\alpha_6-b\tilde e_4\tilde e_2-c\tilde e_3^2-d\tilde e_2^3=0.$$
By Corollary \ref{cor:Kn 12}, $\alpha$ is a torsion element. So it has the form 
$\alpha=ux_1^4+vy_{5,0}$ by Theorem \ref{thm: tor subgroup} and its proof, where $u\in\Z/\gcd(2,n)$, $v\in\Z/\gcd(5,n)$. Since $nx_1=\tilde e_2x_1=\tilde e_3x_1=0$, we have $\alpha x_1=ux_1^5+vy_{5,0}x_1=0$ if $n\not\equiv 2$ mod $4$ by the definition of $\lambda_n$. It follows that $u=0$ by \cite[Proposition 6.4]{fan2024cohomology}, and that $v=0$ by \cite[Lemma 7.3]{fan2024operators}, and so $\alpha=0$ in this case. 
For the case $n\equiv 2$ mod $4$, the same reasoning shows that
$\alpha x_1\in\Z/\gcd(2,n)\{x_1^5\}$ so that $\alpha\in \Z/\gcd(2,n)\{x_1^4\}$. If $\alpha=x_1^4$, we simply replace $\tilde \alpha_6$ by $\tilde \alpha_6+x_1^4$. Then we still have $\alpha=0$, noting that $\lambda_n$ is odd and $x_1^4$ is $2$-torsion.
\end{proof}

\subsection*{Acknowledgments} 
The first named author is supported by the National Natural Science Foundation of China (Grant No. 12271183) and by the GuangDong Basic and Applied Basic Research Foundation (Grant No. 2023A1515012217). The thirdly and fourth named author are supported by the National Natural Science Foundation of China (Grant No. 12261091). The fourth named author was also supported by the National Natural Science Foundation of China (Grant No. 12001474). 

\bibliographystyle{plain}
\bibliography{ref}
\end{document}